\documentclass[preprint,3p,11pt,authoryear]{elsarticle}

\makeatletter
\def\ps@pprintTitle{%
 \let\@oddhead\@empty
 \let\@evenhead\@empty
 \def\@oddfoot{\centerline{\thepage}}%
 \let\@evenfoot\@oddfoot}
\makeatother


\usepackage{amssymb,amsthm,mathtools}
\usepackage{lineno} 

\usepackage{enumerate,natbib,caption,graphicx,subcaption,xcolor,geometry}
\geometry{top=1.4in,bottom=1.65in,left=1.3in,right=1.3in}

\usepackage{appendix} 
\usepackage{mathrsfs} 
\usepackage{dsfont} 
\usepackage{cancel} 

\usepackage[colorlinks]{hyperref}

\newtheorem{theorem}{Theorem}[section]

\newtheorem{corollary}[theorem]{Corollary}
\newtheorem{lemma}[theorem]{Lemma}
\newtheorem{remark}[theorem]{Remark}

\newtheorem*{notation}{Notation}

\makeatletter \@addtoreset{equation}{section} \makeatother

\newcommand{\N}{\mathbb{N}}
\newcommand{\R}{\mathbb{R}}

\newcommand{\QQ}{\mathbb{Q}}
\newcommand{\PP}{\mathbb{P}}
\newcommand{\EE}{\mathbb{E}}

\newcommand{\VV}{\mathbb{V}\mathrm{ar}}

\newcommand{\OO}{\mathcal O}
\newcommand{\oo}{\mathrm{o}}
\newcommand{\leqdef}{\vcentcolon=}

\newcommand{\rd}{{\rm d}}
\newcommand{\ind}{\mathds{1}}

\allowdisplaybreaks

\begin{document}

    \vspace{-4cm}
    \noindent
    \fbox{
    \begin{minipage}{36em}
        \small
        This manuscript was accepted for publication in Statistics (Taylor \& Francis).
        This version {\it may differ} from the published version (\href{https://doi.org/10.1080/02331888.2022.2084544}{doi:10.1080/02331888.2022.2084544}) in typographic details.
    \end{minipage}
    }

    \vspace{7mm}

\begin{frontmatter}

    \title{Refined normal approximations for the central and noncentral chi-square distributions and some applications}%

    \author[a1,a2]{Fr\'ed\'eric Ouimet\texorpdfstring{\corref{cor1}\fnref{fn1}}{)}}%

    \address[a1]{California Institute of Technology, Pasadena, CA 91125, USA.}%
    \address[a2]{McGill University, Montreal, QC H3A 0B9, Canada.}%

    \cortext[cor1]{Corresponding author}%
    \ead{frederic.ouimet2@mcgill.ca}%


    \begin{abstract}
        In this paper, we prove a local limit theorem for the chi-square distribution with $r > 0$ degrees of freedom and noncentrality parameter $\lambda \geq 0$.
        We use it to develop refined normal approximations for the survival function.
        Our maximal errors go down to an order of $r^{-2}$, which is significantly smaller than the maximal error bounds of order $r^{-1/2}$ recently found by \cite{doi:10.1109/LCOMM.2013.111113.131879} and \cite{doi:10.1109/LCOMM.2015.2461681}.
        Our results allow us to drastically reduce the number of observations required to obtain negligible errors in the energy detection problem, from $250$, as recommended in the seminal work of \cite{doi:10.1109/PROC.1967.5573}, to only $8$ here with our new approximations.
        We also obtain an upper bound on several probability metrics between the central and noncentral chi-square distributions and the standard normal distribution, and we obtain an approximation for the median that improves the lower bound previously obtained by \cite{MR1072495}.
    \end{abstract}

    \begin{keyword}
        asymptotic statistics, local limit theorem, Gaussian approximation, normal approximation, chi-square distribution, noncentrality, noncentral chi-square, error bound, survival function, percentage point, median, quantiles, detection theory
        \MSC[2020]{Primary: 62E20 Secondary: 60F99}
    \end{keyword}

\end{frontmatter}

\section{Introduction}\label{sec:intro}

    For any $r > 0$ and $\lambda\geq 0$, the density function of the central and noncentral chi-square distribution $\chi_r^2(\lambda)$ is defined by
    \begin{equation}\label{eq:chi.square.density}
        \begin{aligned}
            f_{r,\lambda}(x)
            &=
                \begin{cases}
                    \frac{(x/2)^{r/2-1}}{2 e^{(x + \lambda)/2}}, &\mbox{if } \lambda = 0 ~~(\text{central}), \\
                    \frac{1}{2} e^{-(x + \lambda)/2} \left(\frac{x}{\lambda}\right)^{(r/2 - 1)/2} I_{r/2 - 1}(\sqrt{\lambda x}), &\mbox{if } \lambda > 0 ~~(\text{noncentral}),
                \end{cases} \\
            &= \frac{(x/2)^{r/2-1}}{2 e^{(x + \lambda)/2}} \sum_{j=0}^{\infty} \frac{(\lambda x / 4)^j}{j! \, \Gamma(r/2 + j)}, \quad x > 0,
        \end{aligned}
    \end{equation}
    using the convention $0^0 \leqdef 1$, and where $I_{\nu}$ denotes the modified Bessel function of the first kind of order $\nu$.
    The special case $\lambda = 0$ corresponds to the central chi-square distribution.
    Sometimes we refer to the chi-square distribution to include the central and noncentral cases all at once.
    When $r = n\in \N$, the expression in \eqref{eq:chi.square.density} corresponds to the density function of $X = \sum_{i=1}^n Z_i^2$, where $Z_i\sim \mathcal{N}(\mu_i, 1)$, and the noncentrality parameter $\lambda$ satisfies $\lambda = \sum_{i=1}^n \mu_i^2$.
    For all $r > 0$, the mean and variance of $X\sim \chi_r^2(\lambda)$ are well known to be
    \begin{equation}\label{eq:noncentral.chi.square.mean.variance}
        \EE[X] = r + \lambda \qquad \text{and} \qquad \VV(X) = 2 (r + 2\lambda),
    \end{equation}
    see, e.g., \cite[Chapter~29, Section~4]{MR1326603}.

    The first goal of our paper (Lemma~\ref{lem:LLT.chi.square}) is to establish a local asymptotic expansion for the ratio of the central and noncentral chi-square density \eqref{eq:chi.square.density} to the normal density with the same mean and variance, namely:
    \begin{equation}\label{eq:phi.M}
        \frac{1}{\sqrt{2 (r + 2\lambda)}} \phi(\delta_x), \quad \text{where} ~~\phi(z) \leqdef \frac{e^{-z^2/2}}{\sqrt{2\pi}} ~~ \text{and} ~~ \delta_x \leqdef \frac{x - (r + \lambda)}{\sqrt{2 (r + 2\lambda)}}.
    \end{equation}

    In \cite{doi:10.1109/LCOMM.2013.111113.131879} and \cite{doi:10.1109/LCOMM.2015.2461681}, the authors derived the following two uniform bounds on a basic approximation of the survival function of the $\chi_r^2(\lambda)$ distribution using the survival function of the standard normal distribution, respectively,
    \begin{align}
        &\max_{a\in \R} \Big|\int_a^{\infty} \hspace{-0.6mm} f_{r,\lambda}(x) \rd x - \int_a^{\infty} \frac{1}{\sqrt{2 (r + 2\lambda)}} \phi(\delta_x) \rd x\Big| \leq \frac{1}{\sqrt{9 \pi r}} + \frac{\widetilde{C}_0}{r}, \quad \text{as } r\to \infty, \label{eq:most.relevant.1} \\
        &\max_{a\in \R} \Big|\int_a^{\infty} \hspace{-0.6mm} f_{r,\lambda}(x) \rd x - \int_a^{\infty} \frac{1}{\sqrt{2 (r + 2\lambda)}} \phi(\delta_x) \rd x\Big| \notag \\
        &\qquad\lessapprox \frac{(r + 4 \lambda)}{\pi W(1) (r + 2 \lambda)^2} \left(1 + \frac{r^2}{32} \binom{r}{8}^{-1/4}\right) + \frac{(r + 3 \lambda)}{2 \sqrt{\pi} (r + 2 \lambda)^{3/2}}, \quad \text{for } r\geq 8, \label{eq:most.relevant.2}
    \end{align}
    where $\widetilde{C}_0 > 0$ is a universal constant, and $W(\cdot)$ is the Lambert $W$-function and $W(1) \simeq 0.5671433$.
    A much older reference, \cite{MR125669}, derived a similar approximation with a maximal error of order $r^{-1/2}$ in the tails. The second goal in our paper is to refine those approximations significantly down to a maximal error of order $r^{-2}$. As a corollary, we can obtain an expansion for the percentage points (or quantiles) of the central and noncentral chi-square distributions in terms of the percentage points of the standard normal distribution. We will do so for the median in Section~\ref{sec:asymptotics.median}, where we improve a previous approximation given by \cite{MR1072495}.

    Here is a brief outline of the paper.
    In Section~\ref{sec:related.works}, we survey the literature on approximations of the cumulative distribution function (henceforth abbreviated by the acronym c.d.f.) and percentage points of the central and noncentral chi-square distribution.
    In Section~\ref{sec:main.results}, we present our main results, which include a local limit theorem for the central and noncentral chi-square distribution and corresponding approximations for the survival function (which improves the results in \cite{doi:10.1109/LCOMM.2013.111113.131879} and \cite{doi:10.1109/LCOMM.2015.2461681}).
    In Section~\ref{sec:applications}, we present two applications of our main results: distance measure bounds between the chi-square distribution and the standard normal distribution, and the asymptotics of the median of the chi-square distribution.
    As mentioned above, the latter improves some results by \cite{MR1072495}.
    The proofs of the main results are gathered in Appendix~\ref{sec:proofs}, and some technical moment calculations are gathered in Appendix~\ref{sec:moments}.

    \begin{notation}
        Throughout the paper, $u = \OO(v)$ means that $\limsup_{r\to \infty} |u / v| < C$, where $C > 0$ is a universal constant.
        Whenever $C$ might depend on some parameter, we add a subscript (for example, $u = \OO_{\lambda}(v)$).
        Similarly, $u = \oo(v)$ means that $\lim |u / v| = 0$ as $r\to \infty$, and subscripts indicate which parameters the convergence rate can depend on.
    \end{notation}

\section{Related works}\label{sec:related.works}

    In addition to the papers of \cite{MR125669}, \cite{doi:10.1109/LCOMM.2013.111113.131879} and \cite{doi:10.1109/LCOMM.2015.2461681}, several other works have discussed normal approximations to the central and/or noncentral chi-square distributions. We briefly mention some of them below.
    For a general reference, some of these approximations are surveyed in \cite[Chapter~29, Section~8]{MR1326603}.
    For the remainder of this section, $\Xi_n$ and $\Xi_{n,\lambda}$ (with $n\in \N$) will denote random variables distributed according to $\chi_n^2(0)$ and $\chi_n^2(\lambda)$, respectively.

    \vspace{3mm}
    \cite{Fisher_1928} shows the approximate normality of a properly translated square root of a central chi-squared random variable, i.e.,
    \begin{equation}
        \sqrt{\Xi_n} - \sqrt{n - 1} ~~\text{is close in law to } \mathcal{N}(0,1), \quad \text{as } n\to \infty.
    \end{equation}
    In a similar fashion, \cite{doi:10.1073/pnas.17.12.684} show the approximate normality of properly translated third roots of central chi-squared random variables, i.e.,
    \begin{align}
        &\sqrt[3]{\Xi_n} - \sqrt[3]{n - 2/3} ~~\text{is close in law to } \mathcal{N}\left(0, \frac{2}{9} \sqrt[3]{\frac{1}{n - 2/3}}\right), \quad \text{as } n\to \infty, \\
        &\sqrt[3]{\Xi_n / n} - \left(1 - \frac{2}{9 n}\right) ~~\text{is close in law to } \mathcal{N}\left(0, \frac{2}{9 n}\right), \quad \text{as } n\to \infty.
    \end{align}
    \cite{MR5588} compares numerically the percentage point approximations derived from the square root transformation of \cite{Fisher_1928} and the third root transformation of \cite{doi:10.1073/pnas.17.12.684}, and he concludes that the latter is significantly more accurate.
    \cite{Germond_Hastings_1944} develop various approximations for the c.d.f.\ of the noncentral chi-square distribution with two degrees of freedom, see \cite[p.466]{MR1326603}.
    \cite{MR15734} uses a method of ``probits'' and ``logits'' to approximate the chi-square distribution, showing the better approximation obtained by the logits.
    \cite{MR34564} gives the following representation of the c.d.f.\ of the noncentral chi-square distribution:
    \begin{equation}
        \PP(\Xi_{n,\lambda} \leq a) \leq \int_0^a \frac{e^{-(x + \lambda)/2}}{2^{n/2}} \sum_{j=0}^{\infty} \frac{x^{n/2 + j - 1} \lambda^j}{\Gamma(n/2 + j) 2^{2j} j!} \rd x.
    \end{equation}
    Patnaik presents many approximations for the c.d.f.\ One line of investigation suggests to approximate the above c.d.f.\ in terms of the central chi-square c.d.f.\ and to combine it with the approximation result of Fisher, which then enables a comparison between the noncentral chi-square c.d.f.\ and the standard normal c.d.f.
    \cite{doi:10.2307/2332731} obtains approximate formulas for the percentage points and the c.d.f.\ of the noncentral chi-square distribution, using the first five cumulants of $(X / (r + \lambda))^{1/3}$, where $X\sim \chi_r^2(\lambda)$, expressed as series in inverse powers of $(r + \lambda)$ up to $(r + \lambda)^{-4}$.
    \cite{MR101581} modifies Abdel-Aty's method by taking $(X / (r + \lambda))^h$, for some $h$ that depends on $r$ and $\lambda$ and which makes the leading third cumulant of $(X / (r + \lambda))^h$ vanish. The objective was to make the distribution of $(X / (r + \lambda))^h$ more nearly normal than that of $(X / (r + \lambda))^{1/3}$. Results are given for a `first (normal) approximation' and a `second approximation', based on a Cornish-Fisher expansion.
    \cite{doi:10.1093/biomet/44.3-4.528} presents an approximation for the 95\% quantile of the noncentral chi-square distribution.
    \cite{MR109379} compares the c.d.f.\ approximations and corresponding percentage point (or quantile) approximations of \cite{MR34564}, \cite{MR109380} and others.

    \vspace{3mm}
    \cite{MR119270} gives an improved Wilson-Hilferty normalized deviate approximation to the chi-square distribution.
    \cite{MR156397} examines a translated version of the third root transformation of a chi-squared random variable, namely $\sqrt[3]{(X - b)/(r + \lambda)}$ for $X\sim \chi_r^2(\lambda)$, and shows how the translation parameter $b$ can be chosen for the approximation to be as good as the `closer approximation' of \cite{doi:10.2307/2332731} for values of the noncentrality parameter that are not too small.
    \cite{MR172370} give an approximation to the c.d.f.\ of the noncentral chi-square distribution in terms of central chi-square distributions, derived from a Laguerre series expansion of the density function. Their approximation add two corrective terms to the one in \cite{MR34564}, see also \cite{MR216616}.
    \cite{MR238470} uses the following expression to approximate the improper integral representation of the survival function of the central chi-square distribution:
    \begin{equation}
        \begin{aligned}
            &\int_a^{\infty} f_{r,0}(x) \rd x \\[-1mm]
            &\approx \frac{e^{-(a - r)/2}}{(a/2 - r/2 + 1) \sqrt{2\pi}} \left(\frac{a}{r}\right)^{r/2} \left(1 - \frac{r/2 - 1}{(a/2 - r/2 + 1)^2 + a}\right) \left(\frac{12 (r/2)^{3/2}}{6 r + 1}\right).
        \end{aligned}
    \end{equation}
    \cite{doi:10.1002/j.1538-7305.1969.tb01111.x} gives the following formula
    \begin{equation}
        \begin{aligned}
            &\int_a^{\infty} \hspace{-0.6mm} f_{r,\lambda}(x) \rd x \\[-1mm]
            &\approx \frac{e^{-\lambda/2}}{\Gamma(r/2)} \cdot \left[\hspace{-1mm}
                \begin{array}{l}
                    \Gamma(r/2,a/2) + \frac{(\lambda/2)}{1!} \left\{\Gamma(r/2,a/2) + (a/r) (a/2)^{r/2 - 1} e^{-a/2}\right\} \\[1mm]
                    + \frac{(\lambda/2)^2}{2!} \left\{\Gamma(r/2,a/2) + \left(\frac{a}{r} + \frac{(a/2)^2}{r/2 (r/2 + 1)}\right) (a/2)^{r/2 - 1} e^{-a/2}\right\} \\[2mm]
                    + \frac{(\lambda/2)^3}{3!} \left\{\hspace{-1mm}
                        \begin{array}{l}
                            \Gamma(r/2,a/2) \\
                            + \left(\frac{a}{r} + \frac{(a/2)^2}{r/2(r/2 + 1)} + \frac{(a/2)^3}{r/2 (r/2 + 1) (r/2 + 2)}\right) (a/2)^{r/2 - 1} e^{-a/2}
                        \end{array}
                        \hspace{-1mm}\right\} \\
                    + \dots
                \end{array}
                \hspace{-1mm}\right]
        \end{aligned}
    \end{equation}
    for ``accurate'' approximations of the c.d.f.\ of the noncentral chi-square distribution over a wide range of degrees of freedom (even over 10,000).

    \vspace{3mm}
    In the same vein as Fisher and Wilson \& Hilferty, \cite{MR595404} and \cite{doi:10.2307/2684608} show the approximate normality of a properly translated fourth root of a chi-squared random variable.
    \cite{MR759243} derive many new integral representations for the c.d.f.\ of the noncentral chi-square distribution; the numerical usefulness remains unclear.
    \cite{MR980908} approximate the density and c.d.f.\ of a noncentral chi-square distribution using a table of modified Bessel functions.
    They give an exact expression when the degrees of freedom are odd.
    \cite{MR1012493} presents two formulas to approximate the c.d.f.\ of the noncentral chi-square distribution, namely the following first and second order Wiener germ approximations:
    \begin{align}
        &\PP(\Xi_{r,\lambda} \leq a) \approx \Phi\left(\pm \sqrt{
            \begin{array}{l}
                r (s - 1)^2 \left(\frac{1}{2 s} + \mu^2 - \frac{1}{s} h(1 - s)\right) \\
                - \log\left(\frac{1}{s} - \frac{2}{s} \cdot \frac{h(1 - s)}{(1 + 2 \mu^2 s)}\right) + \frac{2 (1 + 3 \mu^2)^2}{9 r (1 + 2 \mu^2)^3}
            \end{array}
            }\right), \\
        &\PP(\Xi_{r,\lambda} \leq a) \approx \Phi\left(\pm \sqrt{
            \begin{array}{l}
                r (s - 1)^2 \left(\frac{1}{2 s} + \mu^2 - \frac{1}{s} h(1 - s)\right) \\
                - \log\left(\frac{1}{s} - \frac{2}{s} \cdot \frac{h(1 - s)}{(1 + 2 \mu^2 s)}\right) + \frac{2}{r} B(s)
            \end{array}
            }\right),
    \end{align}
    where
    \begin{equation}
        \begin{aligned}
            B(s)
            &= - \frac{3}{2} \cdot \frac{(1 + 4 \mu^2 s)}{(1 + 2 \mu^2 s)^2} + \frac{5}{3} \cdot \frac{(1 + 3 \mu^2 s)^2}{(1 + 2 \mu^2 s)^3} + \frac{2 (1 + 3 \mu^2 s)}{(s - 1) (1 + 2 \mu^2 s)^2} \\
            &\quad+ \frac{3 \eta}{(s - 1)^2 (1 + 2 \mu^2 s)} - \frac{(1 + 2 h(\eta)) \eta^2}{2 (s - 1)^2 (1 + 2 \mu^2 s)}.
        \end{aligned}
    \end{equation}
    and
    \begin{equation}
        \begin{aligned}
            &h(y) =
            \begin{cases}
                \frac{1}{y^2} \left[(1 - y) \log(1 - y) + y - \frac{y^2}{2}\right], &\mbox{if } h\in (0,1), \\
                0, &\mbox{if } h = 0,
            \end{cases} \\
            &\mu = \frac{\lambda}{r}, \quad s = \frac{-1 + \sqrt{1 + (4 x \mu^2) / r}}{2 \mu^2} ~~\text{for some } x > 0, \\[1mm]
            &\eta = \frac{1 + 2 \mu^2 s - 2 h(1 - s) - s - 2 \mu^2 s^2}{1 + 2 \mu^2 s - 2 h(1 - s)}.
        \end{aligned}
    \end{equation}
    The numerical implementation  of these formulas is given by \cite{doi:10.1007/s001800000029}.
    \cite{MR1097704} give two computational approximations to the c.d.f.\ of the noncentral chi-square distribution of any degree of freedom and odd degrees of freedom respectively, using truncated infinite sums:
    \begin{align}
        &\PP(\Xi_{n,\lambda} \leq a) = \frac{e^{-(a + \lambda)/2} (a/2)^{n/2}}{\Gamma(n/2 + 1)} \sum_{i=0}^{\infty} \frac{1}{i!} C_i(\lambda a/4,n/2) \sum_{s=0}^{\infty} C_s(a/2,n/2+i), \\
        &\qquad\text{where } C_i(\lambda a/4,n/2) = \frac{\lambda a/4}{n/2 + i} C_{i-1}(\lambda a/4,n/2), \quad \text{for } i = 1,2,3,\dots \notag \\
        &\qquad\text{and } C_0(\lambda a/4,n/2) = 1, \notag \\
        &\PP(\Xi_{2n+1,\lambda} > a) = \PP(\Xi_{2n+1} > a) + \sqrt{\frac{2}{\pi}} e^{-a/2} \sum_{k=n+1}^{\infty} \frac{a^{k-1}}{(2k-1)!!} \PP(\Xi_{2k - 2n} \leq \lambda).
    \end{align}
    \cite{MR1072495} gives bounds on the quantiles of the noncentral chi-square distribution in terms of the noncentrality parameter. Increasing the accuracy requires shortening the range of the noncentrality parameter.
    \cite{doi:10.2307/2347584} gives an algorithm to compute the noncentral chi-square c.d.f.\ using a series representation based on a Poisson weighted sum of central chi-square c.d.f.s. \cite{MR1199912} gives two asymptotic expansions for the survival function of the noncentral chi-square distribution involving the survival function of the standard normal distribution, namely
    \begin{equation}
        \begin{aligned}
            &\int_a^{\infty} \hspace{-0.6mm} f_{r,\lambda}(x) \rd x \sim 1 - \left(\frac{a}{\lambda}\right)^{r/4-1/4} \Psi(\sqrt{r/2} - \sqrt{\lambda/2}), \\
            &\int_a^{\infty} \hspace{-0.6mm} f_{r,\lambda}(x) \rd x \sim \Psi\left(-u_0 \sqrt{\frac{\lambda a}{2}}\right) + \phi\left(\frac{u_0}{\sqrt{\lambda a}}\right) \sum_{k=0}^{\infty} c_{2k} \frac{\Gamma(k + 1/2)}{\Gamma(1/2)} \left(\frac{2}{\lambda a}\right)^k, \quad \lambda a\to \infty,
        \end{aligned}
    \end{equation}
    for specific constants $u_0,c_0,c_2,\dots$ given in \cite[p.60]{MR1199912}.
    \cite{doi:10.1080/03610919508813266} obtain an alternative error bound on Ruben's algorithm \citep{MR418308} for the computation of the noncentral chi-square c.d.f. They also compare finite algorithms (such as \cite{MR34564} and others) with the algorithms proposed by \cite{MR1097704} for such computation and they discuss the rates of convergence of two different series representations for the c.d.f.
    \cite{MR1625949} uses third order asymptotic methods that only requires evaluation of the standard normal to approximate the c.d.f.\ of the noncentral chi-square distribution.

    \vspace{3mm}
    \cite{MR2133578} approximates the c.d.f.\ of a central chi-square distribution by considering a linear combination of fractional powers of a chi-squared random variable. The mean absolute error is shown to be lower than other power transformations (two of the most well known are the square root transformation by \cite{doi:10.2307/2340521} and the third root transformation by \cite{doi:10.1073/pnas.17.12.684}) for degrees of freedom $1 \leq r \leq 1000$.
    \cite{MR3655852} uses Stein's method to obtain an order $n^{-1}$ bound on the distributional distance over smooth test functions between Pearson's statistics and its limiting chi-square distribution.
    \cite{MR3619428} gives three formulas for the c.d.f.\ of the noncentral chi-square distribution in terms of modified Bessel functions, leaky aquifer functions, and generalized incomplete gamma functions, respectively.
    \cite{Okagbue_Adamu_Anake_2017} uses quantile mechanics methods to approximate the quantile density function (the derivative of the quantile function) and the corresponding quartiles of the chi-square distribution. The result of the method is a power series solution to an ordinary differential equation.
    \cite{doi:10.3390/math9020129} give various representations of the noncentral chi-square c.d.f.\ in terms of modified Bessel functions of the first kind, derived from two mean value theorems for definite integrals.
    \cite{arXiv:2111.00949} uses Stein's method to obtain an order $n^{-1}$ bound on the distributional distance over smooth test functions between Friedman's statistics and its limiting chi-square distribution.

    \vspace{3mm}
    For a discussion on the estimation of quadratic forms or the noncentrality parameter for a noncentral chi-square distribution, we refer the reader to \cite{MR331611}, \cite{MR378217}, \cite{MR0397966}, \cite{MR661020}, \cite{MR663453}, \cite{MR832996}, \cite{MR888440}, \cite{MR1221856}, \cite{MR1333127}, \cite{MR1836572,MR1863157}, \cite{MR2657050} and \cite{MR3667406}.

\section{Main results}\label{sec:main.results}

    First, we need local approximations for the ratio of the noncentral chi-square density to the normal density function with the same mean and variance.

    \begin{lemma}[Local approximation]\label{lem:LLT.chi.square}
        For any $r > 0$, $0 \leq \lambda = \oo(\sqrt{r})$ and $\eta\in (0,1)$, define
        \begin{equation}
            D_{r,\lambda} \leqdef \frac{r + \lambda}{\sqrt{2 (r + 2\lambda)}},
        \end{equation}
        and let
        \begin{equation}\label{eq:bulk}
            B_{r,\lambda}(\eta) \leqdef \bigg\{x\in (0,\infty) : \bigg|\frac{\delta_x}{D_{r,\lambda}}\bigg| \leq \eta \, r^{-1/3}\bigg\},
        \end{equation}
        denote the bulk of the noncentral chi-square distribution.
        Then, uniformly for $k\in B_{r,\lambda}(\eta)$, we have, as $r\to \infty$,
        \begin{align}\label{eq:lem:LLT.chi.square.eq.log}
            \log\bigg(\frac{f_{r,\lambda}(x)}{\frac{1}{\sqrt{2 (r + 2\lambda)}} \phi(\delta_x)}\bigg)
            &= r^{-1/2} \, \bigg\{\frac{\sqrt{2}}{3} \, \delta_x^3 - \sqrt{2} \, \delta_x\bigg\} + r^{-1} \, \bigg\{-\frac{1}{2} \, \delta_x^4 + \delta_x^2 - \frac{1}{6}\bigg\} \notag \\[-2.5mm]
            &\quad+ r^{-3/2} \, \bigg\{\frac{2^{3/2}}{5} \, \delta_x^5 - \frac{2^{3/2}}{3} \, \delta_x^3\bigg\} + \OO\bigg(\frac{(1 \vee \lambda^4) + \lambda^2 |\delta_x|^2}{r^2}\bigg),
        \end{align}
        Furthermore,
        \begin{align}\label{eq:lem:LLT.chi.square.eq}
            \frac{f_{r,\lambda}(x)}{\frac{1}{\sqrt{2 (r + 2\lambda)}} \phi(\delta_x)} = 1
            &+ r^{-1/2} \, \bigg\{\frac{\sqrt{2}}{3} \, \delta_x^3 - \sqrt{2} \, \delta_x\bigg\} + r^{-1} \, \bigg\{\frac{1}{9} \, \delta_x^6 - \frac{7}{6} \, \delta_x^4 + 2 \, \delta_x^2 - \frac{1}{6}\bigg\} \notag \\[-2mm]
            &\quad+ r^{-3/2} \, \bigg\{\frac{\sqrt{2}}{81} \, \delta_x^9 - \frac{5}{9\sqrt{2}} \, \delta_x^7 + \frac{47}{15\sqrt{2}} \, \delta_x^5 - \frac{37}{9\sqrt{2}} \, \delta_x^3 + \frac{1}{3\sqrt{2}} \, \delta_x\bigg\} \notag \\[0.5mm]
            &\quad+ \OO_{\eta}\bigg(\frac{(1 \vee \lambda^4) + |\delta_x|^{10}}{r^2}\bigg).
        \end{align}
    \end{lemma}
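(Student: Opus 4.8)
The plan is to take logarithms and reduce the statement to Stirling's series together with Taylor expansions. From the series form of the density in~\eqref{eq:chi.square.density} and $\phi(z)=e^{-z^2/2}/\sqrt{2\pi}$,
\begin{equation*}
\log\!\Bigg(\frac{f_{r,\lambda}(x)}{\tfrac{1}{\sqrt{2(r+2\lambda)}}\,\phi(\delta_x)}\Bigg)
=\Big(\tfrac r2-1\Big)\log\tfrac x2-\log 2-\tfrac{x+\lambda}{2}-\log\Gamma\!\big(\tfrac r2\big)+\log S_{r,\lambda}(x)+\tfrac12\log\!\big(2(r+2\lambda)\big)+\tfrac12\log(2\pi)+\tfrac12\delta_x^2,
\end{equation*}
where $S_{r,\lambda}(x)\leqdef\sum_{j\ge 0}\frac{(\lambda x/4)^j}{j!\,(r/2)_j}$ and $(r/2)_j=\Gamma(r/2+j)/\Gamma(r/2)$ (so $S_{r,0}\equiv 1$, which disposes of the central case simultaneously). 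First I would substitute Stirling's series $\log\Gamma(r/2)=(\tfrac r2-\tfrac12)\log\tfrac r2-\tfrac r2+\tfrac12\log(2\pi)+\tfrac{1}{6r}-\tfrac{1}{45r^3}+\cdots$, cancel the two $\tfrac12\log(2\pi)$, merge the $\log(r/2)$ terms into $-\tfrac12\log(r/2)$, and fold $-\tfrac12\log(r/2)+\tfrac12\log(2(r+2\lambda))$ against $-\log 2$ to leave $\tfrac12\log(1+2\lambda/r)$; what remains is
\[
\Big(\tfrac r2-1\Big)\log(1+u)-\tfrac{ru}{2}+\tfrac12\delta_x^2+\tfrac12\log\!\big(1+\tfrac{2\lambda}{r}\big)-\tfrac\lambda2+\log S_{r,\lambda}(x)-\tfrac{1}{6r}+\cdots,\qquad u\leqdef\tfrac{x-r}{r},
\]
where on $B_{r,\lambda}(\eta)$ one has $|u|\le\eta r^{-1/3}(1+\oo(1))$ by the definition of $D_{r,\lambda}$, and consequently $|\delta_x|=\OO(r^{1/6})$.

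Next I would deal with the noncentral factor. Since $\lambda=\oo(\sqrt r)$ and $x=(r+\lambda)(1+\OO(r^{-1/3}))$ on the bulk, one has $1/(r/2)_j=(r/2)^{-j}\exp\!\big(-\tfrac{j(j-1)}{r}+\OO(j^3/r^2)\big)$; summing term by term against $\sum_{j\ge 0}\tfrac{w^j}{j!}j^k=e^{w}T_k(w)$ (Touchard polynomials) yields $S_{r,\lambda}(x)=e^{w}\big(1-\tfrac{w^2}{r}+\OO((1\vee\lambda^4)/r^2)\big)$ with $w\leqdef\tfrac{\lambda x}{2r}$, hence $\log S_{r,\lambda}(x)-\tfrac\lambda2=\tfrac{\lambda(x-r)}{2r}-\tfrac{w^2}{r}+\OO\!\big((1\vee\lambda^4)/r^2\big)$. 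I would then expand $\log(1+u)=u-\tfrac{u^2}{2}+\tfrac{u^3}{3}-\cdots$, substitute $u$ through the exact identity $x-r=\lambda+\delta_x\sqrt{2(r+2\lambda)}$ together with $\sqrt{2(r+2\lambda)}=\sqrt{2r}\,\big(1+\tfrac\lambda r-\tfrac{\lambda^2}{2r^2}+\cdots\big)$, and collect by powers of $r^{-1/2}$. The linear- and quadratic-in-$u$ parts of $\tfrac r2\log(1+u)-\tfrac{ru}{2}$ cancel $-\tfrac{ru}{2}$ and $\tfrac12\delta_x^2$; the main obstacle is then the bookkeeping of the $\lambda$-dependent leftovers, which must cancel in pairs --- the $\tfrac\lambda r$ in $\tfrac12\log(1+\tfrac{2\lambda}{r})$ against the $-\tfrac\lambda r$ inside $-u$; the term $\tfrac{\lambda\delta_x\sqrt{2(r+2\lambda)}}{2r}$ in $\tfrac{\lambda(x-r)}{2r}$ against its negative in $-\tfrac{ru^2}{4}+\tfrac12\delta_x^2$; the three $\lambda^2/r$ contributions of $-\tfrac{ru^2}{4}$, $\tfrac{\lambda(x-r)}{2r}$ and $-\tfrac{w^2}{r}$ (summing to $-\tfrac{\lambda^2}{4r}+\tfrac{\lambda^2}{2r}-\tfrac{\lambda^2}{4r}=0$); the $\lambda\delta_x^2/r$ and $\lambda^2|\delta_x|/r^{3/2}$ parts of $-\tfrac{ru^2}{4}+\tfrac12\delta_x^2$ and $-\tfrac{w^2}{r}$ against those of $\tfrac{ru^3}{6}$; and so on. This is structurally forced: the standardized cumulants of $\chi_r^2(\lambda)$ satisfy $\kappa_n/\kappa_2^{n/2}=2^{n/2-1}(n-1)!\,r^{(2-n)/2}+\OO(\lambda^2 r^{(2-n)/2-2})$, because $\tfrac{r+n\lambda}{(r+2\lambda)^{n/2}}=r^{(2-n)/2}+\OO(\lambda^2 r^{-(n+2)/2})$ --- the $\OO(\lambda r^{-n/2})$ terms cancel --- so all $\lambda$-dependence sits one order below the leading term and, after using $|\delta_x|=\OO(r^{1/6})$, is absorbed into $\OO\!\big(\tfrac{(1\vee\lambda^4)+\lambda^2|\delta_x|^2}{r^2}\big)$. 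What survives organizes into exactly the three displayed blocks of~\eqref{eq:lem:LLT.chi.square.eq.log}, and uniformity over $B_{r,\lambda}(\eta)$ follows from the geometric bounds on the truncated Taylor and Touchard series.

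Finally, \eqref{eq:lem:LLT.chi.square.eq} follows by exponentiating~\eqref{eq:lem:LLT.chi.square.eq.log}. Writing its right-hand side as $r^{-1/2}A_1+r^{-1}A_2+r^{-3/2}A_3+\OO(\cdots)$ with $A_1=\tfrac{\sqrt2}{3}\delta_x^3-\sqrt2\,\delta_x$, $A_2=-\tfrac12\delta_x^4+\delta_x^2-\tfrac16$ and $A_3=\tfrac{2^{3/2}}{5}\delta_x^5-\tfrac{2^{3/2}}{3}\delta_x^3$, and using $e^{t}=1+t+\tfrac{t^2}{2}+\tfrac{t^3}{6}+\cdots$, the coefficient of $r^{-1/2}$ is $A_1$, that of $r^{-1}$ is $A_2+\tfrac12A_1^2=\tfrac19\delta_x^6-\tfrac76\delta_x^4+2\delta_x^2-\tfrac16$, and that of $r^{-3/2}$ is $A_3+A_1A_2+\tfrac16A_1^3$, which reduces to the displayed degree-nine polynomial (its $\delta_x^9$-term $\tfrac{\sqrt2}{81}\delta_x^9$ coming from $\tfrac16A_1^3$); multiplying the $\OO(\cdot)$ remainder of~\eqref{eq:lem:LLT.chi.square.eq.log} by the bounded factor $e^{\OO(\cdots)}$ and collecting the monomials the exponential series produces at order $r^{-2}$ gives the error $\OO_\eta\!\big(\tfrac{(1\vee\lambda^4)+|\delta_x|^{10}}{r^2}\big)$, the $\eta$-dependence entering only through the geometric bounds on the series tails. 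Throughout, the genuinely delicate point is keeping every discarded term uniformly of the stated order on $B_{r,\lambda}(\eta)$, which is exactly where the bounds $|\delta_x|=\OO(r^{1/6})$ on the bulk and $\lambda=\oo(\sqrt r)$ are used.
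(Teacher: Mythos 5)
Your proposal is correct and follows essentially the same route as the paper: take logarithms, apply Stirling's series to $\log\Gamma(r/2)$, Taylor-expand $\log(1+\cdot)$ on the bulk, reduce the noncentral series via Poisson-type factorial-moment identities to an $e^{w}\big(1-\tfrac{w^2}{r}+\cdots\big)$ correction, and obtain the second display by exponentiating with the cubic Taylor polynomial of $e^y$ (your coefficients $A_2+\tfrac12 A_1^2$ and $A_3+A_1A_2+\tfrac16 A_1^3$ match the stated polynomials exactly). The only differences are cosmetic bookkeeping choices --- you center $x$ at $r$ and normalize the Pochhammer symbol by $(r/2)^j$ with Touchard polynomials, whereas the paper centers at $r+\lambda$ and normalizes by $(r+\lambda)^j$ before applying the same $\sum_j a^j j^k/j!$ identities --- and these lead to the same cancellations and error accounting.
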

    For the interested reader, local approximations akin to Lemma~\ref{lem:LLT.chi.square} were derived for the Poisson, binomial, negative binomial, multinomial, Dirichlet, Wishart and multivariate hypergeometric distributions in \cite[Lemma~2.1]{MR4213687}, \cite[Lemma~3.1]{MR4340237}, \cite[Lemma~2.1]{arXiv:2103.08846}, \cite[Theorem~2.1]{MR4249129}, \cite[Theorem~1]{MR4394974}, \cite[Theorem~1]{MR4358612}, \cite[Theorem~1]{MR4361955}, respectively.
    See also earlier references such as \cite{MR207011} (based on Fourier analysis results from \cite{MR14626}) for the Poisson, binomial and negative binomial distributions, and \cite{MR538319} for the binomial distribution.

    By integrating the second local approximation in Lemma~\ref{lem:LLT.chi.square}, we can approximate the survival function of the $\chi_r^2(\lambda)$ distribution, i.e.,
    \begin{equation}
        S_{r,\lambda}(a) \leqdef \int_a^{\infty} \hspace{-0.6mm} f_{r,\lambda}(x) \rd x, \quad a\in \R,
    \end{equation}
    using the survival function of the normal distribution with the same mean and variance.

    \begin{theorem}[Survival function approximations]\label{thm:refined.approximations}
        For any $0 \leq \lambda = \oo(\sqrt{r})$, we have, as $r\to \infty$,
        \begin{align}
            &\hspace{-2cm}\text{Order 0 approximation:} \notag \\[-0.3mm]
            &E_0 \leqdef \max_{a\in \R} \left|S_{r,\lambda}(a) - \Psi(\delta_a)\right| \leq \frac{M_0}{r^{1/2}} + \frac{C_0}{r}, \label{eq:order.0.approx} \\[0.5mm]
            &\hspace{-2cm}\text{Order 1 approximation:} \notag \\[-0.3mm]
            &E_1 \leqdef \max_{a\in \R} \left|S_{r,\lambda}(a) - \Psi\big(\delta_{a - d_1}\big)\right| \leq \frac{M_1}{r} + \frac{C_1 \, (1 \vee \lambda)}{r^{3/2}}, \label{eq:order.1.approx} \\[0.5mm]
            &\hspace{-2cm}\text{Order 2 approximation:} \notag \\[-0.3mm]
            &E_2 \leqdef \max_{a\in \R} \left|S_{r,\lambda}(a) - \Psi\big(\delta_{a - (d_1 + \frac{d_2}{\sqrt{r}})}\big)\right| \leq \frac{M_2}{r^{3/2}} + \frac{C_2 \, (1 \vee \lambda^4)}{r^2}, \label{eq:order.2.approx} \\[0.5mm]
            &\hspace{-2cm}\text{Order 3 approximation:} \notag \\[-0.3mm]
            &E_3 \leqdef \max_{a\in \R} \left|S_{r,\lambda}(a) - \Psi\big(\delta_{a - (d_1 + \frac{d_2}{\sqrt{r}} + \frac{d_3}{r})}\big)\right| \leq \frac{C_3 \, (1 \vee \lambda^4)}{r^2}, \label{eq:order.3.approx}
        \end{align}
        where $\Psi$ denotes the survival function of the standard normal distribution, $C_i,~0 \leq i \leq 3,$ are universal constants, and
        \begin{equation}\label{eq:constants.d1.d2.d3}
            \begin{aligned}
                d_1 &\leqdef \frac{2}{3} (\delta_a^2 - 1), \qquad d_2 \leqdef \frac{1}{9\sqrt{2}} (\delta_a - 7 \delta_a^3), \\[0.5mm]
                d_3 &\leqdef \frac{1}{405} \left(219 \delta_a^4 + (270 \lambda - 14) \delta_a^2 - (270 \lambda + 13)\right), \\
                M_0 &\leqdef \max_{y\in \R} \frac{\sqrt{2}}{3} |y^2 - 1| \phi(y) = \frac{1}{\sqrt{9 \pi}} = 0.188063\dots, \\
                M_1 &\leqdef \max_{y\in \R} \frac{1}{18} |7 y^3 - y| \phi(y) = 0.171448\dots, \\
                M_2 &\leqdef \max_{y\in \R} \frac{1}{405\sqrt{2}} \left|219 y^4 + (270 \lambda - 14) y^2 - (270 \lambda + 13)\right| \phi(y), \\
                &\qquad (\text{$M_2$ is equal to $0.326258$ when $\lambda = 0$}).
            \end{aligned}
        \end{equation}
        The constants $M_0$, $M_1$, $M_2$ are illustrated numerically in Figure~\ref{fig:asymptotics.E0.E1.E2}, for multiple values of $\lambda$.
        The maximal errors are plotted as a function of $r$ in Figure~\ref{fig:loglog.errors.plot}.
    \end{theorem}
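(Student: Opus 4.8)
\emph{Overall approach.} The plan is to integrate the polynomial local expansion \eqref{eq:lem:LLT.chi.square.eq} over $[a,\infty)$, to evaluate the resulting polynomial‑times‑Gaussian integrals in closed form, and then to reabsorb the correction terms into a translation $a\mapsto a-h$ of the argument of the normal survival function. \emph{Step 1 (reduction to the bulk).} First I would dispose of the tails. Writing $[x_-,x_+]$ for $B_{r,\lambda}(\eta)$ and noting that $D_{r,\lambda}\sim\sqrt{r/2}$ when $\lambda=\oo(\sqrt r)$, one has $\delta_{x_\pm}=\pm\,\Theta(r^{1/6})$; a standard Chernoff‑type exponential tail bound for $\chi_r^2(\lambda)$, applied at deviations of order $r^{2/3}$ about the mean, gives $\PP(X\notin[x_-,x_+])=\OO(e^{-c\eta^2 r^{1/3}})$, and the Gaussian tail $\int_{|z|\ge\Theta(r^{1/6})}\phi(z)(1+|z|^{10})\,\rd z$ is of the same (super‑polynomially small) size. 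Consequently, if $a\ge x_+$ then $S_{r,\lambda}(a)$ and each $\Psi(\delta_{a-h})$ are $\OO(e^{-cr^{1/3}})$ (all the shifts $h$ below satisfy $\beta h=\OO(r^{-1/6})\to0$, where $\beta\leqdef(2(r+2\lambda))^{-1/2}$, so they move the Gaussian argument negligibly), and if $a\le x_-$ they are all $1-\OO(e^{-cr^{1/3}})$; in either regime the asserted bounds hold trivially. It therefore suffices to treat $a\in[x_-,x_+]$, in which case $[a,x_+]\subseteq B_{r,\lambda}(\eta)$ and $\int_{x_+}^\infty f_{r,\lambda}=\OO(e^{-cr^{1/3}})$.

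\emph{Step 2 (integrating the expansion).} On $[a,x_+]$ I would substitute \eqref{eq:lem:LLT.chi.square.eq} for $f_{r,\lambda}(x)$, change variables to $z=\delta_x$ (so $\rd z=\beta\,\rd x$), and re‑extend the range of integration to $[\delta_a,\infty)$ at the cost of another $\OO(e^{-cr^{1/3}})$ (an add‑and‑subtract trick, since the tail of a polynomial against $\phi$ beyond $\Theta(r^{1/6})$ is super‑polynomially small). Since $\int_{\R}\phi(z)\,\frac{(1\vee\lambda^4)+|z|^{10}}{r^2}\,\rd z=\OO\!\big(\frac{1\vee\lambda^4}{r^2}\big)$, this produces
\begin{equation*}
    S_{r,\lambda}(a)=\Psi(\delta_a)+\phi(\delta_a)\Big[\frac{R_1(\delta_a)}{\sqrt r}+\frac{R_2(\delta_a)}{r}+\frac{R_3(\delta_a)}{r^{3/2}}\Big]+\OO_{\eta}\!\Big(\frac{1\vee\lambda^4}{r^2}\Big),
\end{equation*}
where, writing $P_k$ for the polynomial in the $r^{-k/2}$‑bracket of \eqref{eq:lem:LLT.chi.square.eq}, the polynomial $R_k$ is the unique solution of $P_k(z)=zR_k(z)-R_k'(z)$. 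Indeed $\frac{\rd}{\rd z}[R_k(z)\phi(z)]=-P_k(z)\phi(z)$, whence $\int_y^\infty P_k(z)\phi(z)\,\rd z=R_k(y)\phi(y)$; such an $R_k$ exists precisely because $\int_{\R}P_k(z)\phi(z)\,\rd z=0$, which one checks from the standard normal moments ($P_1,P_3$ are odd, and for $P_2$ one gets $\frac{15}{9}-\frac{21}{6}+2-\frac16=0$). A short computation gives $R_1(z)=\frac{\sqrt2}{3}(z^2-1)$, $R_2(z)=\frac19 z^5-\frac{11}{18}z^3+\frac16 z$, and a degree‑$8$ polynomial $R_3$; crucially, all the $R_k$ are free of $\lambda$.

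\emph{Step 3 (reabsorbing into a translation).} For the order‑$0$ bound, taking absolute values above and maximizing over $\delta_a\in\R$ gives $M_0=\max_y\frac{\sqrt2}{3}|y^2-1|\phi(y)=\frac{\sqrt2}{3}\phi(0)=\frac1{\sqrt{9\pi}}$ (the maximum of $|y^2-1|e^{-y^2/2}$ is $1$, attained at $y=0$, since on $[1,\infty)$ the function $(y^2-1)e^{-y^2/2}$ peaks at $2e^{-3/2}\approx0.446$), with the remaining terms absorbed into $C_0/r$. For the higher orders I would compare with $\Psi(\delta_{a-h})=\Psi(\delta_a-\beta h)$ through the Taylor expansion
\[
    \Psi(\delta_a-\beta h)=\Psi(\delta_a)+\phi(\delta_a)\Big[\beta h+\frac{(\beta h)^2}{2}\delta_a+\frac{(\beta h)^3}{6}(\delta_a^2-1)+\OO((\beta h)^4)\Big]
\]
together with $\beta=\frac{1}{\sqrt{2r}}\big(1-\frac{\lambda}{r}+\OO(\lambda^2/r^2)\big)$ — this is where $\lambda=\oo(\sqrt r)$ enters. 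Plugging $h=d_1$, then $h=d_1+d_2/\sqrt r$, then $h=d_1+d_2/\sqrt r+d_3/r$, and matching the coefficients of $r^{-1/2},r^{-1},r^{-3/2}$ against $R_1(\delta_a),R_2(\delta_a),R_3(\delta_a)$ forces the formulas in \eqref{eq:constants.d1.d2.d3}: the $r^{-1/2}$‑match gives $d_1=\sqrt2\,R_1(\delta_a)=\frac23(\delta_a^2-1)$; the $r^{-1}$‑match gives $\frac{d_2}{\sqrt2}=R_2(\delta_a)-\frac{d_1^2\delta_a}{4}=\frac1{18}(\delta_a-7\delta_a^3)$; and the $r^{-3/2}$‑match gives $d_3=\lambda d_1+\sqrt2\big(R_3(\delta_a)-\frac{d_1 d_2\delta_a}{2}-\frac{d_1^3(\delta_a^2-1)}{12\sqrt2}\big)$, whose noncentrality part $\lambda d_1=\frac{270\lambda}{405}(\delta_a^2-1)$ matches the $\lambda$‑term of $d_3$ — this is how $\lambda$ shows up in $d_3$ but not in $d_1,d_2$. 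With these choices the targeted corrections vanish, so $E_1$ (resp.\ $E_2$) is controlled by $\phi(\delta_a)$ times the first surviving bracket $r^{-1}\cdot\frac1{18}(\delta_a-7\delta_a^3)$ (resp.\ $r^{-3/2}\cdot\frac{d_3}{\sqrt2}$), plus the $\OO(\frac{1\vee\lambda^4}{r^2})$ remainder; maximizing $|\,\cdot\,|\,\phi$ over $\delta_a$ produces $M_1,M_2$, while $E_3$ retains only the remainder.

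\emph{Main obstacle.} The routine‑but‑tedious core is the coefficient matching of Step 3 (in particular computing $R_3$ and verifying the closed form of $d_3$, which hinges on several algebraic cancellations among the $\delta_a^8$ and $\delta_a^6$ terms). The genuinely delicate part is the uniformity in Steps 1--2: showing that truncating to $B_{r,\lambda}(\eta)$ and re‑extending the integrals costs only super‑polynomially small errors uniformly over all $a\in\R$ and over $\lambda=\oo(\sqrt r)$, which rests on exponential tail bounds for $\chi_r^2(\lambda)$ at $\Theta(r^{2/3})$ deviations and a matching control of the polynomial‑weighted Gaussian tails.
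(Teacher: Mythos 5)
Your proposal is correct and follows essentially the same route as the paper: truncate to the bulk via exponential tail bounds, integrate the local expansion of Lemma~\ref{lem:LLT.chi.square} against $\phi$, and Taylor-expand $\Psi(\delta_{a-c})$ with $c=d_1+d_2/\sqrt r+d_3/r$, matching coefficients (including the $\lambda$ contribution from expanding $(2(r+2\lambda))^{-1/2}$, which is exactly how $\lambda$ enters $d_3$) to cancel successive orders and then maximizing the first surviving bracket to get $M_0,M_1,M_2$. Your Stein-type identity $\int_y^\infty P_k(z)\phi(z)\,\rd z=R_k(y)\phi(y)$ is just a repackaging of the closed-form expressions for $\Psi_k(\delta_a)$ used in the paper, and your computed $R_1,R_2$, $d_1$, $d_2$ and the structure of $d_3$ agree with \eqref{eq:constants.d1.d2.d3}.
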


    In \cite{doi:10.1109/LCOMM.2015.2461681}, it is mentioned that when using the $r = 250$ recommendation of \cite{doi:10.1109/PROC.1967.5573} for the energy detection problem, the maximal error is $0.01516183$ in \eqref{eq:most.relevant.2} for $\lambda = 0$ (the central chi-square approximation).
    When using the Order~$1$~and~$2$ approximations in Theorem~\ref{thm:refined.approximations} and ignoring the error terms of order $r^{-3/2}$ and $r^{-2}$, we would only need $r = 12$ and $r = 8$, respectively, to achieve a smaller maximal error, which is a significant improvement (although we have to keep in mind that the error bounds are asymptotic).

    \vspace{3mm}
    \begin{figure}[!ht]
        \captionsetup{width=0.7\linewidth}
        \centering
        \begin{subfigure}[b]{0.32\textwidth}
            \centering
            \includegraphics[width=\textwidth, height=0.85\textwidth]{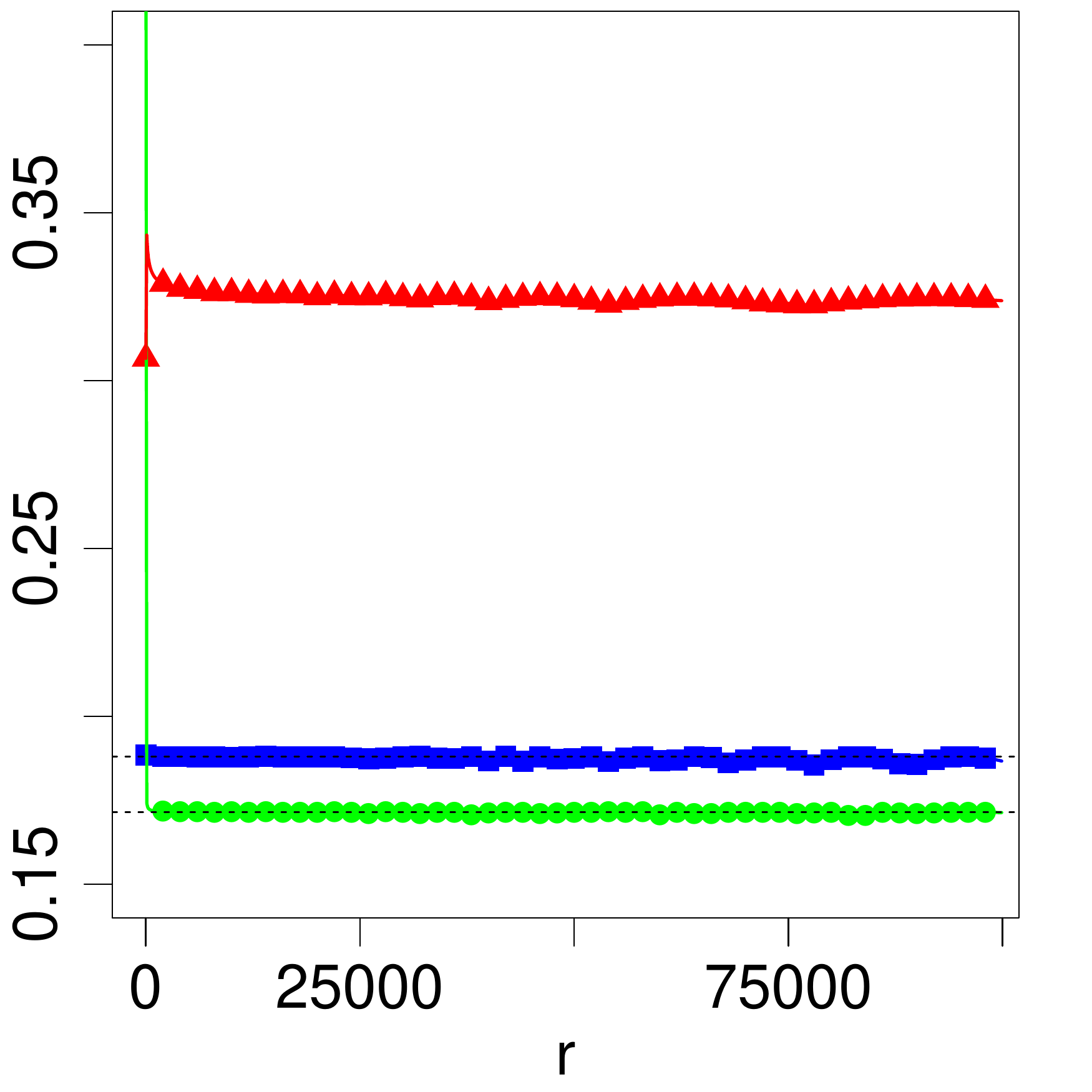}
            \vspace{-6mm}
            \caption{$\lambda = 0$}
        \end{subfigure}
        \begin{subfigure}[b]{0.32\textwidth}
            \centering
            \includegraphics[width=\textwidth, height=0.85\textwidth]{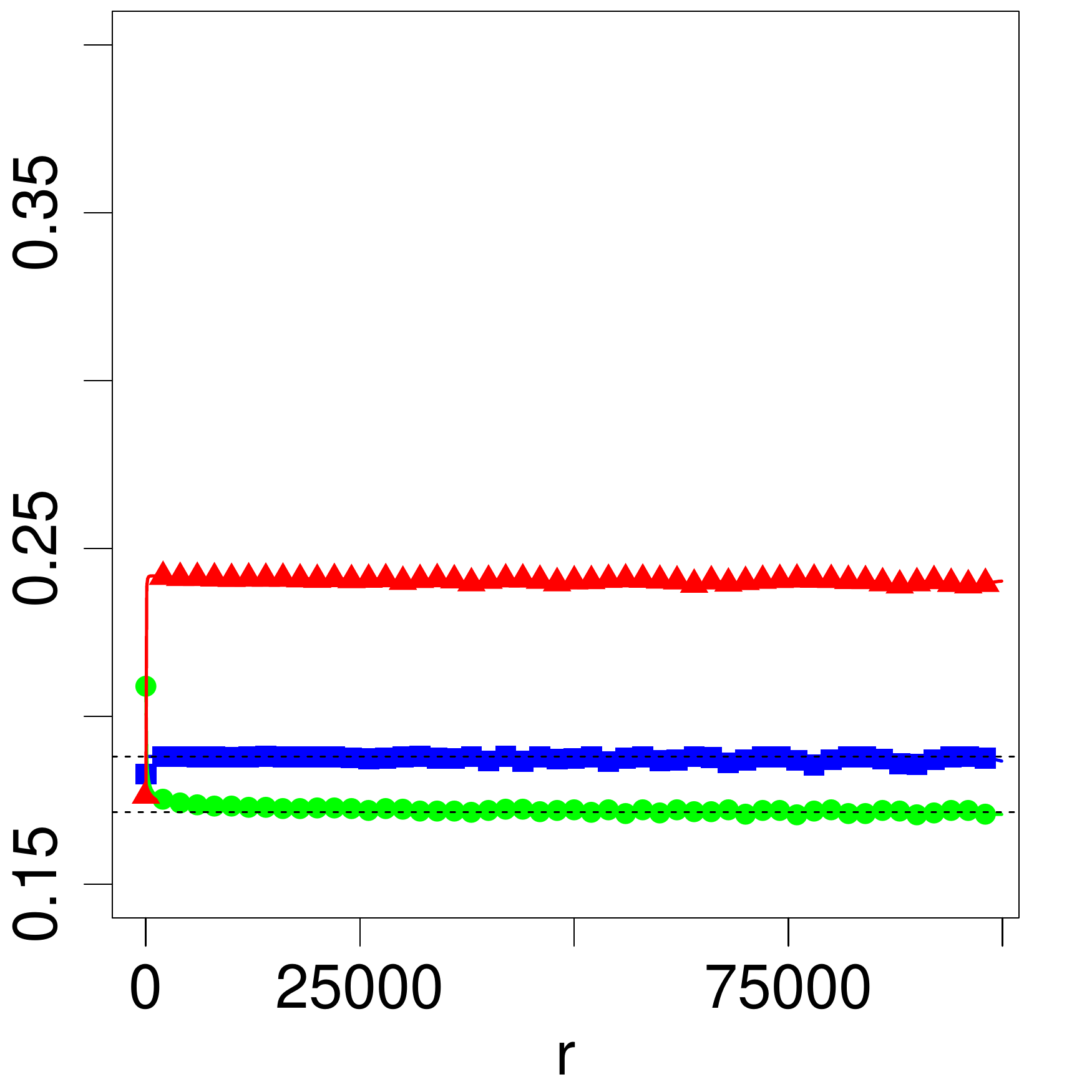}
            \vspace{-6mm}
            \caption{$\lambda = 2$}
        \end{subfigure}
        \begin{subfigure}[b]{0.32\textwidth}
            \centering
            \includegraphics[width=\textwidth, height=0.85\textwidth]{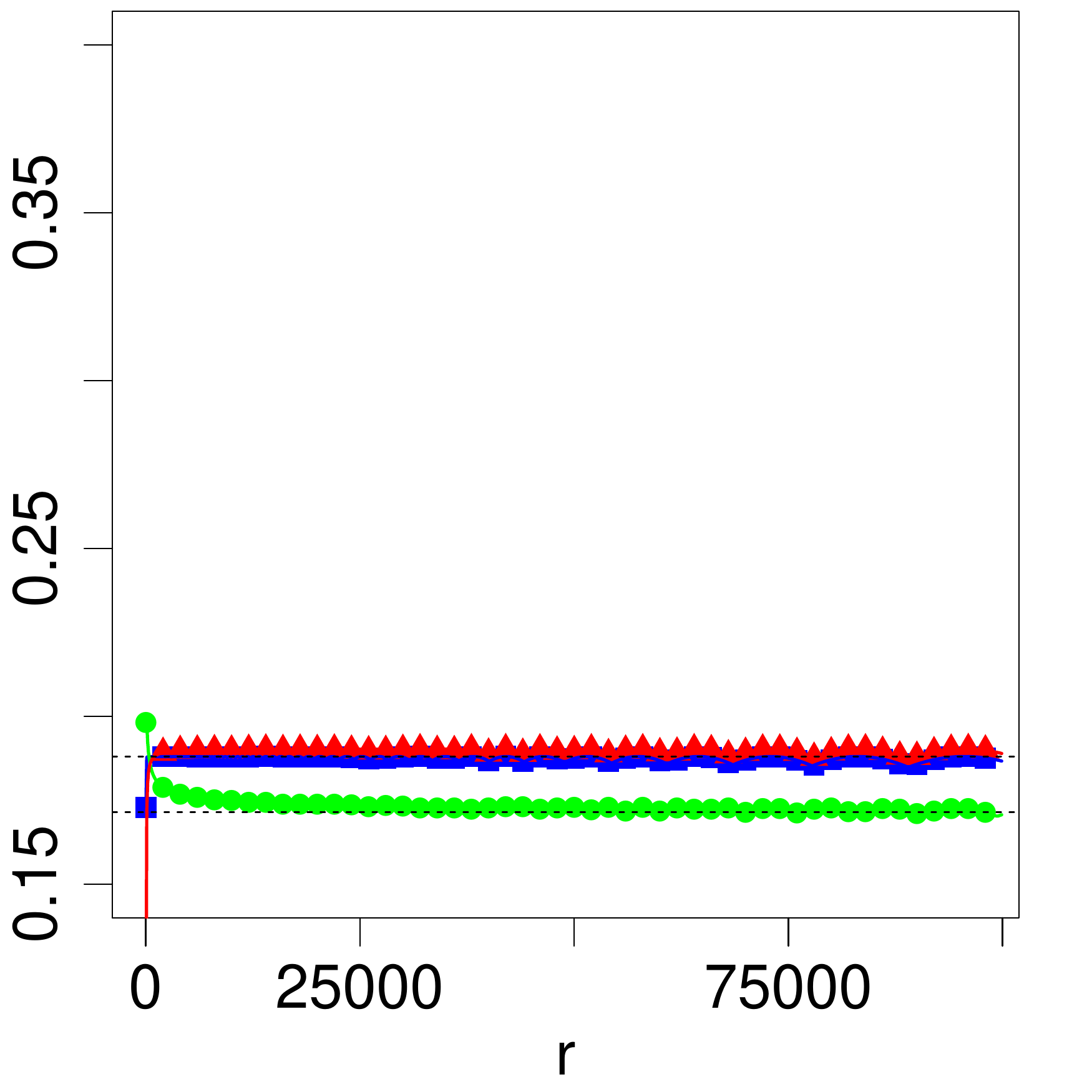}
            \vspace{-6mm}
            \caption{$\lambda = 4$}
        \end{subfigure}
        \begin{subfigure}[b]{0.32\textwidth}
            \vspace{3mm}
            \centering
            \includegraphics[width=\textwidth, height=0.85\textwidth]{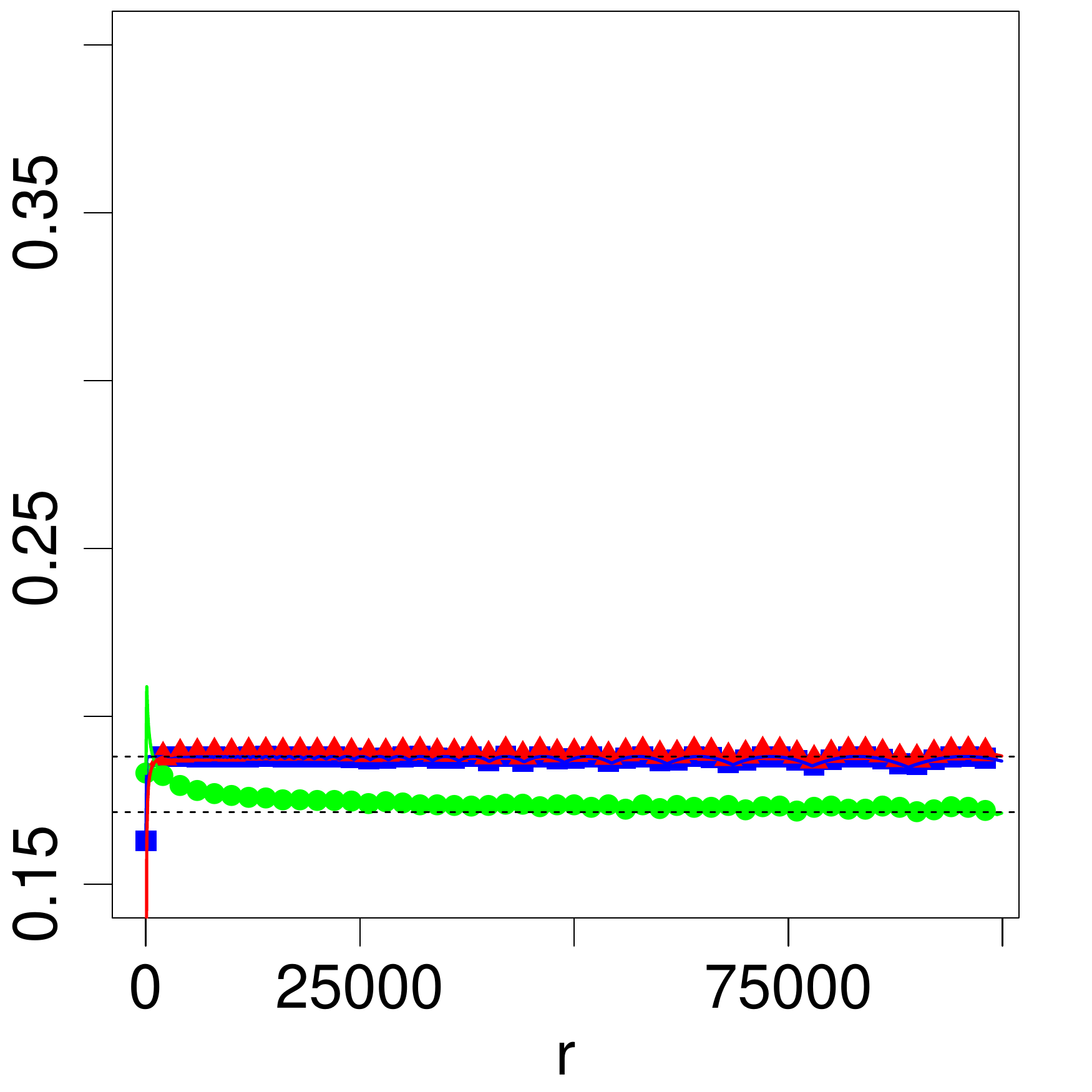}
            \vspace{-6mm}
            \caption{$\lambda = 6$}
            \end{subfigure}
        \begin{subfigure}[b]{0.32\textwidth}
            \centering
            \includegraphics[width=\textwidth, height=0.85\textwidth]{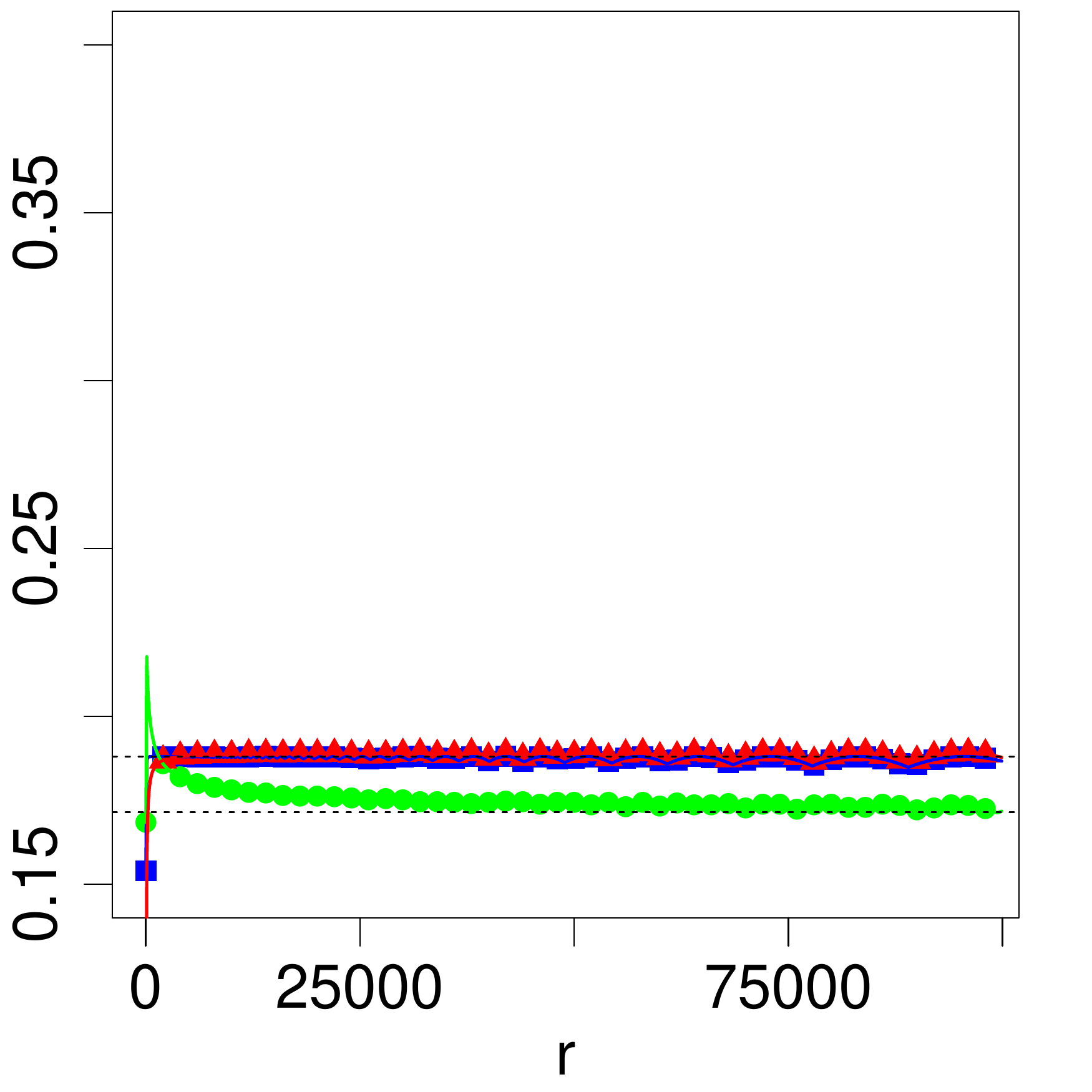}
            \vspace{-6mm}
            \caption{$\lambda = 8$}
        \end{subfigure}
        \begin{subfigure}[b]{0.32\textwidth}
            \centering
            \includegraphics[width=\textwidth, height=0.85\textwidth]{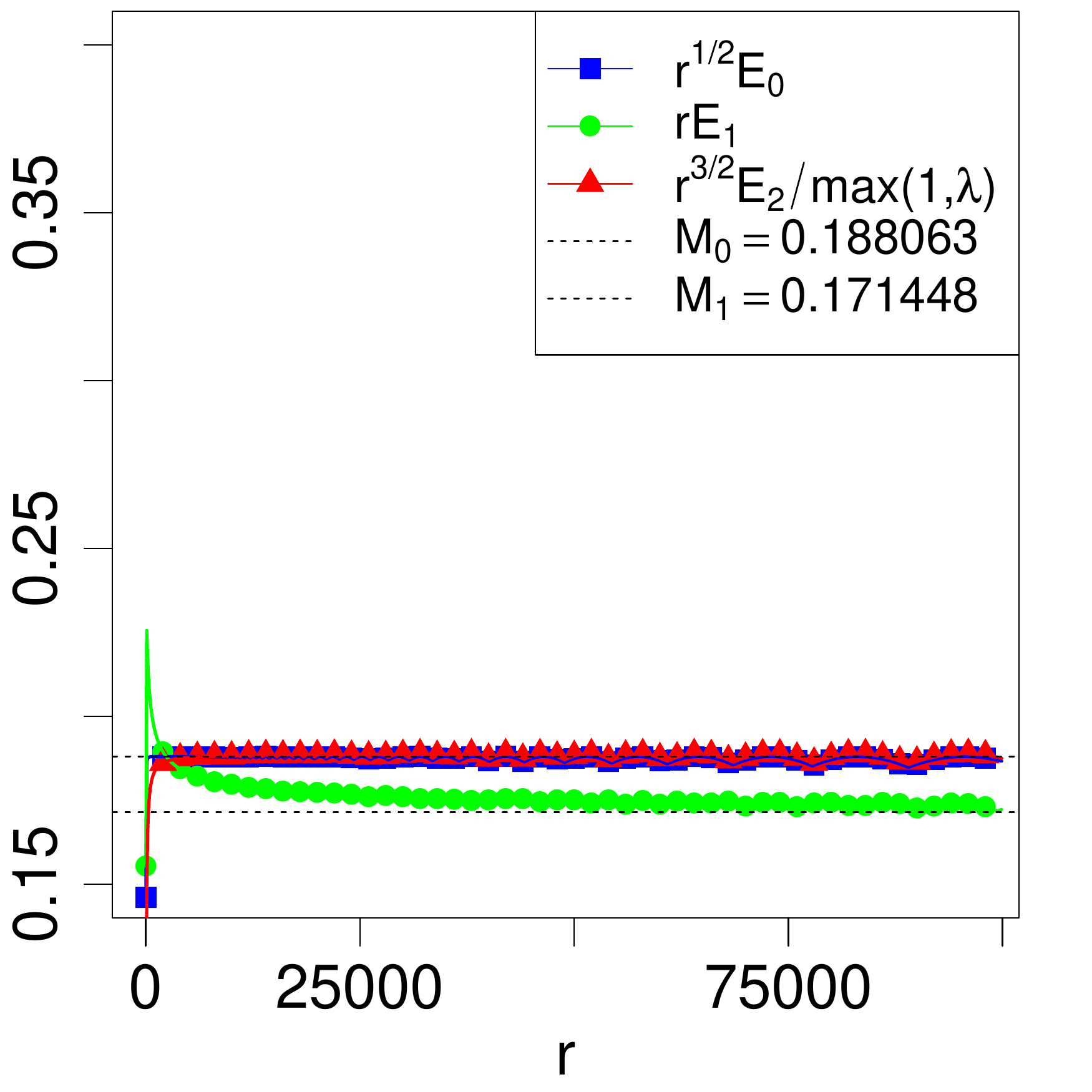}
            \vspace{-6mm}
            \caption{$\lambda = 10$}
        \end{subfigure}
        \caption{Numerical illustration of the asymptotic constants $M_0$, $M_1$ and $M_2$, for each $\lambda\in \{0,2,4,6,8,10\}$.}
        \label{fig:asymptotics.E0.E1.E2}
    \end{figure}

    \begin{figure}[!ht]
        \captionsetup{width=0.7\linewidth}
        \centering
        \includegraphics[width=0.6\textwidth]{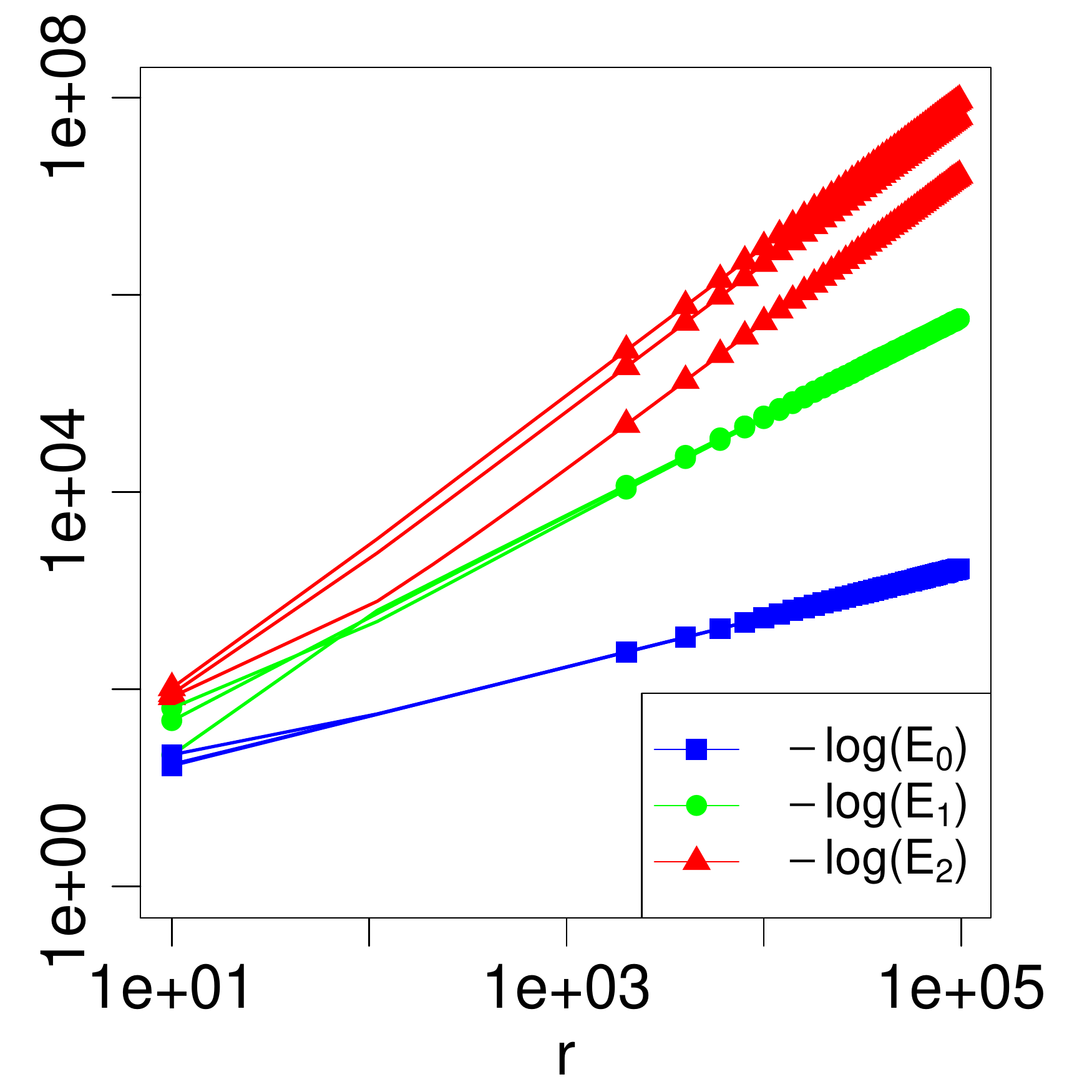}
        \caption{Log-log plot for the maximum absolute errors $E_0$, $E_1$ and $E_2$, as a function of $r$, for $\lambda = 0$, $\lambda = 2$ and $\lambda = 10$.}
        \label{fig:loglog.errors.plot}
    \end{figure}

\newpage
\vspace{5cm}
\section{Applications}\label{sec:applications}

    \subsection{Probability metrics upper bounds between chi-square and normal distributions}\label{sec:total.variation}

    For our first application, we use Lemma~\ref{lem:LLT.chi.square} to compute an upper bound on the total variation between the probability measures induced by \eqref{eq:chi.square.density} and \eqref{eq:phi.M}.
    Given the relation there is between the total variation and other probability metrics such as the Hellinger distance (see, e.g., \citet[p.421]{doi:10.2307/1403865}), we obtain upper bounds on other distance measures automatically.

    \begin{theorem}[Probability metrics bounds]\label{thm:total.variation}
        Let $r > 0$ and $0 \leq \lambda = \oo(\sqrt{r})$.
        Let $\PP_{r,\lambda}$ be the law of the $\chi_r^2(\lambda)$ distribution.
        Let $\QQ_{r,\lambda}$ be the law of the $\mathrm{Normal}(r+\lambda, 2 (r + 2\lambda))$ distribution.
        Then, as $r\to \infty$, we have
        \begin{equation}
            \mathrm{dist}\hspace{0.3mm}(\PP_{r,\lambda},\QQ_{r,\lambda}) \leq \frac{C}{\sqrt{r}} \qquad \text{and} \qquad \mathcal{H}(\PP_{r,\lambda},\QQ_{r,\lambda}) \leq \sqrt{\frac{2 C}{\sqrt{r}}},
        \end{equation}
        where $C > 0$ is a universal constant, $\mathcal{H}(\cdot,\cdot)$ denotes the Hellinger distance, and $\mathrm{dist}\hspace{0.3mm}(\cdot,\cdot)$ can be replaced by any of the following probability metrics: Total variation, Kolmogorov (or Uniform) metric, L\'evy metric, Discrepancy metric, Prokhorov metric.
    \end{theorem}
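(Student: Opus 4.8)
The plan is to prove the total variation estimate first and then read off every other metric from it. Writing $g_{r,\lambda}(x)\leqdef \frac{1}{\sqrt{2(r+2\lambda)}}\,\phi(\delta_x)$ for the density in \eqref{eq:phi.M} (and extending $f_{r,\lambda}$ by $0$ on $(-\infty,0]$), the total variation equals $\frac12 \int_{\R} |f_{r,\lambda}(x) - g_{r,\lambda}(x)|\,\rd x$. Since the Kolmogorov (uniform), L\'evy, discrepancy and Prokhorov metrics are all dominated, up to an absolute constant, by the total variation, and since $\mathcal{H}(\PP_{r,\lambda},\QQ_{r,\lambda}) \le \sqrt{2\,\mathrm{dist}(\PP_{r,\lambda},\QQ_{r,\lambda})}$ when $\mathrm{dist}$ is the total variation (see \citet[p.421]{doi:10.2307/1403865}), it suffices to show $\frac12\int_{\R}|f_{r,\lambda}-g_{r,\lambda}|\,\rd x\le C/\sqrt{r}$. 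I would fix $\eta\in(0,1)$ and split $\R = B_{r,\lambda}(\eta)\cup B_{r,\lambda}(\eta)^{c}$, where $B_{r,\lambda}(\eta)$ is the bulk defined in \eqref{eq:bulk}.

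On the bulk, the idea is to factor out $g_{r,\lambda}$ and apply the second local expansion of Lemma~\ref{lem:LLT.chi.square}. Writing that expansion as $f_{r,\lambda}/g_{r,\lambda} = 1 + r^{-1/2}P_1(\delta_x) + r^{-1}P_2(\delta_x) + r^{-3/2}P_3(\delta_x) + \OO_\eta\!\big(\tfrac{(1\vee\lambda^4)+|\delta_x|^{10}}{r^2}\big)$, with $P_1(y)=\tfrac{\sqrt2}{3}y^3-\sqrt2\,y$ and $P_2,P_3$ the explicit polynomials displayed in \eqref{eq:lem:LLT.chi.square.eq}, and using the substitution $y=\delta_x$ (under which $g_{r,\lambda}(x)\,\rd x = \phi(y)\,\rd y$) followed by enlarging the integration domain to all of $\R$, one obtains
\[
\int_{B_{r,\lambda}(\eta)} |f_{r,\lambda}-g_{r,\lambda}|\,\rd x \ \le\ \frac{1}{\sqrt r}\int_{\R} |P_1(y)|\,\phi(y)\,\rd y \ +\ \OO_\eta\!\Big(\tfrac{1}{r} + \tfrac{(1\vee\lambda^4)}{r^2}\Big),
\]
because every Gaussian absolute moment $\int_{\R}|y|^m\phi(y)\,\rd y$ is finite; the two error terms are of lower order under the standing hypothesis $\lambda=\oo(\sqrt r)$, so the bulk contributes $\OO(r^{-1/2})$.

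For the complement, rather than proving a direct concentration inequality for $\chi_r^2(\lambda)$ I would argue by mass accounting. In the coordinate $y=\delta_x$ the set $B_{r,\lambda}(\eta)^c$ corresponds to $|y|>\eta\,r^{-1/3}D_{r,\lambda}$, and since $\lambda=\oo(\sqrt r)$ forces $D_{r,\lambda}\asymp\sqrt r$ this means $|y|\gtrsim \eta\,r^{1/6}$, whence $\QQ_{r,\lambda}(B_{r,\lambda}(\eta)^c)\le 2e^{-c\eta^2 r^{1/3}}$ for some $c>0$, smaller than any negative power of $r$. For the chi-square mass, since $\int_{\R}f_{r,\lambda}=\int_{\R}g_{r,\lambda}=1$ and by the bulk estimate,
\[
\PP_{r,\lambda}(B_{r,\lambda}(\eta)^c) = 1-\int_{B_{r,\lambda}(\eta)} f_{r,\lambda} \ \le\ \QQ_{r,\lambda}(B_{r,\lambda}(\eta)^c) + \int_{B_{r,\lambda}(\eta)} |f_{r,\lambda}-g_{r,\lambda}| \ =\ \OO(r^{-1/2}).
\]
Therefore $\int_{B_{r,\lambda}(\eta)^c}|f_{r,\lambda}-g_{r,\lambda}| \le \PP_{r,\lambda}(B_{r,\lambda}(\eta)^c)+\QQ_{r,\lambda}(B_{r,\lambda}(\eta)^c)=\OO(r^{-1/2})$, and adding the two contributions yields $\mathrm{dist}(\PP_{r,\lambda},\QQ_{r,\lambda})\le C/\sqrt r$ in total variation.

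It then remains to transfer the bound to the other metrics via the standard comparison inequalities: $|P(I)-Q(I)|\le 2\,\mathrm{dist}_{\mathrm{TV}}(P,Q)$ for every interval $I$ handles the Kolmogorov and discrepancy metrics; the L\'evy metric is bounded by the Kolmogorov metric; the Prokhorov metric is bounded by the total variation; and $\mathcal H\le\sqrt{2\,\mathrm{dist}_{\mathrm{TV}}}$ gives the Hellinger estimate $\mathcal H(\PP_{r,\lambda},\QQ_{r,\lambda})\le\sqrt{2C/\sqrt r}$ (all of this is recalled in \citet[p.421]{doi:10.2307/1403865}). The main obstacle is a small piece of bookkeeping in the bulk step: one must leave the factor $|\delta_x|^{10}$ coming from the remainder of Lemma~\ref{lem:LLT.chi.square} \emph{inside} the integral against $\phi$, so that it only contributes a finite Gaussian moment and hence an $\OO(r^{-2})$ term, rather than replacing it by its supremum over the bulk (which grows like $r^{5/3}$ and would destroy the rate); once this is done the leading correction $\tfrac{\sqrt2}{3}\delta_x^3-\sqrt2\,\delta_x$ produces exactly the $\OO(r^{-1/2})$ order, and the mass-accounting step removes any need for a separate tail bound on the noncentral chi-square.
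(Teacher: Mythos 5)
Your argument is correct, but it takes a genuinely different route from the paper's. The paper never works with the $L^1$ distance directly: it starts from the Hellinger/Pinsker-type inequality of Carter, namely the bound \eqref{eq:first.bound.total.variation} which controls $\|\PP_{r,\lambda}-\QQ_{r,\lambda}\|$ by $\sqrt{2\,\PP(X\in B_{r,\lambda}^{\hspace{0.2mm}c}(1/2))+\EE[\log(\frac{\rd\PP_{r,\lambda}}{\rd\QQ_{r,\lambda}}(X))\ind_{\{X\in B_{r,\lambda}(1/2)\}}]}$, handles the tail probability by a large-deviation bound deduced from the Order~0 approximation of Theorem~\ref{thm:refined.approximations} combined with a Mills-ratio inequality, and estimates the truncated expected log-likelihood ratio via the logarithmic expansion \eqref{eq:lem:LLT.chi.square.eq.log} of Lemma~\ref{lem:LLT.chi.square} together with the moment computations of Lemma~\ref{lem:central.moments.chi.square} and Corollary~\ref{cor:central.moments.chi.square.on.events} (the key points being $\EE[X-(r+\lambda)]=0$ and the third central moment being $\OO(r)$), so that the quantity under the square root is $\OO(r^{-1})$ and the $r^{-1/2}$ rate appears only after taking the square root. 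You instead bound the total variation as $\frac12\int|f_{r,\lambda}-g_{r,\lambda}|$, integrate the ratio expansion \eqref{eq:lem:LLT.chi.square.eq} against $\phi$ on the bulk — correctly keeping $|\delta_x|^{10}$ inside the Gaussian integral — so the $r^{-1/2}$ rate comes directly from the first-order polynomial, and you dispose of the complement by mass accounting, $\PP_{r,\lambda}(B^c)\leq\QQ_{r,\lambda}(B^c)+\int_B|f_{r,\lambda}-g_{r,\lambda}|$, which removes any need for the appendix moment lemmas or a separate chi-square concentration estimate; the transfer to the Kolmogorov, L\'evy, discrepancy and Prokhorov metrics and to $\mathcal{H}\leq\sqrt{2\,\mathrm{TV}}$ is the same bookkeeping the paper relies on via \cite{doi:10.2307/1403865}. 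Your route is more elementary and self-contained, and it even identifies the leading behaviour of the total variation itself (roughly $\frac{1}{2\sqrt r}\int_{\R}|\tfrac{\sqrt2}{3}y^3-\sqrt2 y|\phi(y)\,\rd y$), whereas the paper's route needs only the weaker log-form remainder and obtains the Hellinger-flavoured bound almost for free from Carter's inequality. One caveat, which you share with the published proof rather than introduce: the remainder $(1\vee\lambda^4)/r^2$ inherited from Lemma~\ref{lem:LLT.chi.square} is only $\oo(1)$ under $\lambda=\oo(\sqrt r)$ (it exceeds $r^{-1/2}$ when $\lambda$ grows faster than $r^{3/8}$), so your claim that the error terms are of lower order is exactly as loose as the corresponding step in the paper, where the analogous contributions are absorbed into $\OO(r^{-1})$; this does not put your argument at any disadvantage relative to the paper's.
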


    \begin{proof}[Proof of Theorem~\ref{thm:total.variation}]
        Let $X\sim \PP_{r,\lambda}$.
        By the comparison of the total variation norm $\|\cdot\|$ with the Hellinger distance on page 726 of \cite{MR1922539}, we already know that
        \begin{equation}\label{eq:first.bound.total.variation}
            \|\PP_{r,\lambda} - \QQ_{r,\lambda}\| \leq \sqrt{2 \, \PP(X\in B_{r,\lambda}^{\hspace{0.2mm}c}(1/2)) + \EE\bigg[\log\left(\frac{\rd \PP_{r,\lambda}}{\rd \QQ_{r,\lambda}}(X)\right) \, \ind_{\{X\in B_{r,\lambda}(1/2)\}}\bigg]}.
        \end{equation}
        Then, by applying a large deviation bound for the noncentral chi-square distribution (for example combine the Order~$0$ approximation in Theorem~\ref{thm:refined.approximations} with a Mills ratio Gaussian tail inequality), we get, for $r$ large enough,
        \begin{align}\label{eq:concentration.bound}
            \PP(X\in B_{r,\lambda}^{\hspace{0.2mm}c}(1/2))
            &\leq 100 \, \exp\left(-\frac{1}{100} r^{1/3}\right).
        \end{align}
        By Lemma~\ref{lem:LLT.chi.square}, we have
        \begin{equation}\label{eq:estimate.I.begin}
            \begin{aligned}
                &\EE\bigg[\log\bigg(\frac{\rd \PP_{r,\lambda}}{\rd \QQ_{r,\lambda}}(X)\bigg) \, \ind_{\{X\in B_{r,\lambda}(1/2)\}}\bigg] \\
                &\quad= r^{-1/2} \cdot \EE\left[\bigg\{\frac{\sqrt{2}}{3} \cdot \frac{(X - (r + \lambda))^3}{(2 (r + 2 \lambda))^{3/2}} - \sqrt{2} \cdot \frac{(X - (r + \lambda))}{(2 (r + 2 \lambda))^{1/2}}\bigg\} \, \ind_{\{X\in B_{r,\lambda}(1/2)\}}\right] \\
                &\quad\quad+ r^{-1} \cdot \OO\left(\frac{\EE[(X - (r + \lambda))^4]}{(2 (r + 2 \lambda))^2} + \frac{\EE[(X - (r + \lambda))^2]}{2 (r + 2 \lambda)} + 1\right) \\
                &\quad\quad+ \OO(r^{-3/2}).
            \end{aligned}
        \end{equation}
        By Lemma~\ref{lem:central.moments.chi.square} and Corollary~\ref{cor:central.moments.chi.square.on.events}, we get
        \begin{equation}\label{eq:estimate.I}
            \begin{aligned}
                &\EE\bigg[\log\bigg(\frac{\rd \PP_{r,\lambda}}{\rd \QQ_{r,\lambda}}(X)\bigg) \, \ind_{\{X\in B_{r,\lambda}(1/2)\}}\bigg] \\[0.5mm]
                &\quad= r^{-1/2} \cdot (\PP(X\in B_{r,\lambda}^{\hspace{0.2mm}c}(1/2)))^{1/2} + r^{-1} \cdot \OO(1) + \OO(r^{-3/2}) \\
                &\quad= \OO(r^{-1}).
            \end{aligned}
        \end{equation}
        This ends the proof.
    \end{proof}

    \subsection{Asymptotics of the median}\label{sec:asymptotics.median}

    For our second application, we improve the lower bound for the median of the noncentral chi-square distribution found in Proposition~4.1 of \cite{MR1072495} (the upper bounds are comparable).
    The proof relies on the refined normal approximation in Theorem~\ref{thm:refined.approximations}, a Taylor expansion for the c.d.f.\ of the standard normal distribution, and solving a quadratic equation involving the normalized (via $\delta_{\cdot}$) median.

    \begin{theorem}\label{thm:asymptotics.median}
        Let $r > 0$ and $0 \leq \lambda = \oo(\sqrt{r})$, and let $X\sim \chi_r^2(\lambda)$.
        Then, we have
        \begin{equation}
            \mathrm{Median}\hspace{0.3mm}(X) = r + \lambda - \frac{2}{3} + \OO\left(\frac{1}{r^{1/2}}\right) + \OO\left(\frac{1 \vee \lambda}{r}\right), \quad \text{as } r\to \infty. \label{eq:thm:asymptotics.median.eq.1}
        \end{equation}
    \end{theorem}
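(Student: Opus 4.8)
The plan is to solve the defining relation $S_{r,\lambda}(m) = \tfrac12$ for $m \leqdef \mathrm{Median}(X)$ by feeding it into the Order~$1$ survival approximation \eqref{eq:order.1.approx} and then resolving the resulting algebraic identity between $m$ and its standardized value $\delta_m$ via a quadratic equation. Since $S_{r,\lambda}$ is continuous and strictly decreasing, $m$ is well defined by $S_{r,\lambda}(m) = \tfrac12$.

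The first step is an \emph{a priori} localization of the median. Applying the Order~$0$ approximation \eqref{eq:order.0.approx} at $a = r + \lambda \pm \e\sqrt{2(r+2\lambda)}$ gives $S_{r,\lambda}(a) = \Psi(\pm\e) + \OO(r^{-1/2})$ for each fixed $\e > 0$, which lies strictly below (resp.\ above) $\tfrac12$ once $r$ is large, because $\Psi(\e) < \tfrac12 < \Psi(-\e)$; hence $|\delta_m| < \e$ eventually, so $\delta_m \to 0$. Plugging this back into \eqref{eq:order.0.approx} together with the Taylor expansion $\Psi(t) = \tfrac12 - \phi(0)\, t + \OO(t^3)$ valid near $t = 0$ upgrades this to $\delta_m = \OO(r^{-1/2})$.

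Next, I would take $a = m$ in the Order~$1$ approximation \eqref{eq:order.1.approx}, where $d_1 = \tfrac23(\delta_m^2 - 1)$. Combining it with $S_{r,\lambda}(m) = \tfrac12 = \Psi(0)$ yields $|\Psi(\delta_{m-d_1}) - \Psi(0)| \leq M_1 r^{-1} + C_1(1\vee\lambda) r^{-3/2}$. Because $\delta_{m - d_1} = \delta_m - d_1/\sqrt{2(r+2\lambda)} = \OO(r^{-1/2})$ by the first step, invoking once more the Taylor expansion of $\Psi$ at $0$ gives
\[
  \delta_m - \frac{\tfrac23(\delta_m^2 - 1)}{\sqrt{2(r+2\lambda)}} = \OO\!\left(\frac{1}{r}\right) + \OO\!\left(\frac{1\vee\lambda}{r^{3/2}}\right).
\]
Writing $v \leqdef \sqrt{2(r+2\lambda)}$ and $u \leqdef \delta_m$, this is the quadratic $\tfrac23 u^2 - v u - \tfrac23 = \OO(v r^{-1}) + \OO(v(1\vee\lambda) r^{-3/2})$ in $u$; selecting the root with $u \to 0$ and rationalizing,
\[
  m - (r + \lambda) = u v = \frac34\left(v^2 - v\sqrt{v^2 + \tfrac{16}{9} + \OO(v r^{-1}) + \OO(v(1\vee\lambda) r^{-3/2})}\right) = -\frac23 + \OO\!\left(\frac{1}{r^{1/2}}\right) + \OO\!\left(\frac{1\vee\lambda}{r}\right),
\]
where I used $\lambda = \oo(\sqrt r)$ (so $r + 2\lambda = \OO(r)$ and $v = \OO(\sqrt r)$) to see that $v r^{-1} = \OO(r^{-1/2})$, $v(1\vee\lambda) r^{-3/2} = \OO((1\vee\lambda) r^{-1})$, and that the extra $\OO(v^{-2}) = \OO(r^{-1})$ term produced by rationalization is absorbed. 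This is \eqref{eq:thm:asymptotics.median.eq.1}.

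The routine parts — the Taylor expansions and the rationalization of the square root — cause no trouble. The one point requiring genuine care is the \emph{a priori} localization of $\delta_m$ in the first step: one must know that $\delta_m$ is small before any expansion or inversion of $\Psi$ is legitimate, and it is also there, together with the standing hypothesis $\lambda = \oo(\sqrt r)$, that one verifies the correct branch of the quadratic is taken and that every $\lambda$-dependent error contribution is genuinely of order $(1\vee\lambda)/r$.
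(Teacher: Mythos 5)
Your proof is correct and follows essentially the same route as the paper's: feed the median into the Order~1 approximation \eqref{eq:order.1.approx}, Taylor-expand $\Psi$ at $0$, and solve the resulting quadratic in $\delta_m$, selecting the root compatible with the a priori localization and expanding the square root. The only (minor) difference is that you derive the a priori bound $\delta_m = \OO(r^{-1/2})$ from the paper's own Order~0 approximation \eqref{eq:order.0.approx}, whereas the paper imports it from Proposition~4.1 of \cite{MR1072495}; your variant is self-contained and equally valid.
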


    \begin{proof}[Proof of Theorem~\ref{thm:asymptotics.median}]
        By definition, the median of the $\chi_r^2(\lambda)$ distribution is the point $a^{\star} > 0$ that satisfies $S_{r,\lambda}(a^{\star}) = 1/2$.
        By the Order 1 approximation in Theorem~\ref{thm:refined.approximations}, we want to find $a^{\star}$ such that
        \begin{equation}\label{eq:asymptotics.median.eq.begin}
            \left|\Psi\big(\delta_{a^{\star} - d_1}\big) - \frac{1}{2}\right| \leq \frac{M_1}{r} + \frac{C_1 (1 \vee \lambda)}{r^{3/2}}.
        \end{equation}
        A Taylor expansion for $\Psi$ at $0$ yields
        \begin{equation}
            \Psi(x) = \frac{1}{2} - \frac{x}{\sqrt{2\pi}} + \OO(x^3), \quad \text{as } x\to 0.
        \end{equation}
        Equation~\eqref{eq:asymptotics.median.eq.begin} then becomes
        \begin{equation}\label{eq:asymptotics.median.eq.next}
            \left|a^{\star} - d_1 - (r + \lambda) + \OO(r^{-1})\right| \leq \frac{\widetilde{M}_1}{r^{1/2}} + \frac{\widetilde{C}_1 (1 \vee \lambda)}{r},
        \end{equation}
        for appropriate universal constants $\widetilde{M}_1, \widetilde{C}_1 > 0$.
        The error $\OO(r^{-1})$ in \eqref{eq:asymptotics.median.eq.next} does not depend on $\lambda$ because $\delta_{a^{\star} - d_1} = \OO(r^{-1/2})$ by the a priori bounds we have from Proposition~4.1 in \cite{MR1072495}.
        From \eqref{eq:asymptotics.median.eq.next} and the expression for $d_1$ (at $a = a^{\star}$) in \eqref{eq:constants.d1.d2.d3}, we deduce
        \begin{equation}\label{eq:polynomial.equation}
            \delta_{a^{\star}} = \frac{\frac{2}{3} (\delta_{a^{\star}}^2 - 1)}{\sqrt{2 (r + 2 \lambda)}} + \OO\left(\frac{1}{r}\right) + \OO\left(\frac{1 \vee \lambda}{r^{3/2}}\right).
        \end{equation}
        This quadratic equation in the variable $\delta_{a^{\star}}$ yields the following two solutions (with the notation $\varepsilon_{r,\lambda} \leqdef 1/\sqrt{2 (r + 2 \lambda)}$):
        \begin{equation}\label{eq:asymptotics.median.eq.quadratic}
            (\delta_{a^{\star}})_{1,2} = \frac{-1 \pm \sqrt{1 - 4 \cdot \frac{-2}{3} \varepsilon_{r,\lambda} \cdot \big[\frac{2}{3} \varepsilon_{r,\lambda} - \OO\big(\frac{1}{r}\big) + \OO\big(\frac{1 \vee \lambda}{r^{3/2}}\big)\big]}}{2 \cdot \frac{-2}{3} \varepsilon_{r,\lambda}}
        \end{equation}
        Because of the a priori bounds on the median in Proposition~4.1 of \cite{MR1072495}, the unique solution must be the one with the minus in \eqref{eq:asymptotics.median.eq.quadratic}.
        Therefore, the median $a^{\star}$ satisfies
        \begin{equation}
            a^{\star} - (r + \lambda) = \frac{1 - \sqrt{1 + \frac{16}{9} \varepsilon_{r,\lambda}^2 + \OO\big(\frac{1}{r^{3/2}}\big) + \OO\big(\frac{1 \vee \lambda}{r^2}\big)}}{\frac{4}{3} \varepsilon_{r,\lambda}^2}.
        \end{equation}
        Using the Taylor expansion $\sqrt{1 + y} = 1 + \frac{y}{2} + \OO(y^2), ~\text{as } y\to 0$, we have
        \begin{equation}
            a^{\star} - (r + \lambda) = -\frac{2}{3} + \OO\left(\frac{1}{r^{1/2}}\right) + \OO\left(\frac{1 \vee \lambda}{r}\right).
        \end{equation}
        This ends the proof.
    \end{proof}

    \begin{remark}
        It is possible to use the higher order approximations from Theorem~\ref{thm:refined.approximations} and apply the same logic in the proof of Theorem~\ref{thm:asymptotics.median} to derive an expression for the median which is asymptotically more precise, but the algebra becomes much uglier. In particular, the degree of the polynomial equation to solve in \eqref{eq:polynomial.equation} will increase.
    \end{remark}

\appendix

\begin{appendices}

\section{Proofs}\label{sec:proofs}

    \begin{proof}[Proof of Lemma~\ref{lem:LLT.chi.square}]
        By taking the logarithm in \eqref{eq:chi.square.density}, we have
        \begin{equation}\label{eq:LLT.beginning}
            \log\big(f_{r,\lambda}(x)\big) = - \log 2 - \frac{x + \lambda}{2} + \left(\frac{r}{2} - 1\right) \log\left(\frac{x}{2}\right) + \log \sum_{j=0}^{\infty} \frac{(\lambda x / 4)^j}{j! \, \Gamma(r/2 + j)}.
        \end{equation}
        Since
        \begin{equation}
            x = (r + \lambda) \, \left(1 + \frac{\delta_x}{D_{r,\lambda}}\right),
        \end{equation}
        we can rewrite \eqref{eq:LLT.beginning} as
        \begin{equation}\label{eq:LLT.beginning.next.1}
            \begin{aligned}
                \log\big(f_{r,\lambda}(x)\big)
                &= - \log 2 - \frac{r + \lambda}{2} \left(1 + \frac{\delta_x}{D_{r,\lambda}}\right) - \frac{\lambda}{2} + \left(\frac{r}{2} - 1\right) \log \left(\frac{r + \lambda}{2}\right) \\
                &\quad+ \left(\frac{r}{2} - 1\right) \log\left(1 + \frac{\delta_x}{D_{r,\lambda}}\right) - \log \Gamma\left(\frac{r}{2}\right) \\
                &\quad+ \log \sum_{j=0}^{\infty} \frac{1}{j!} \left(\frac{\lambda}{2} \, \bigg(1 + \frac{\delta_x}{D_{r,\lambda}}\bigg)\right)^j \, \frac{2^{-j} (r + \lambda)^j \Gamma(r/2)}{\Gamma(r/2 + j)}.
            \end{aligned}
        \end{equation}
        Using the expansions
        \begin{equation}
            \left(\frac{r}{2} - 1\right) \log \left(\frac{r + \lambda}{2}\right) = \left(\frac{r}{2} - 1\right) \log\left(\frac{r}{2}\right) + \frac{\lambda}{2} - \frac{\lambda + \lambda^2/4}{r} + \OO\left(\frac{1 \vee \lambda^3}{r^2}\right),
        \end{equation}
        and
        \begin{equation}
            \log \Gamma\left(\frac{r}{2}\right) = \left(\frac{r}{2} - \frac{1}{2}\right) \log \left(\frac{r}{2}\right) - \frac{r}{2} + \frac{1}{2} \log (2\pi) + \frac{1}{6 r} + \OO(r^{-3}),
        \end{equation}
        (the first one is just a Taylor expansion for $r^{-1}$ at $0$, and the second one can be found, for example, in \cite[p.257]{MR0167642})
        we can rewrite \eqref{eq:LLT.beginning.next.1} as
        \begin{equation}\label{eq:LLT.beginning.next.2}
            \begin{aligned}
                \log\big(f_{r,\lambda}(x)\big)
                &= - \frac{1}{2} \log (2 \pi \, 2 r) - \frac{r}{2} \, \frac{\delta_x}{D_{r,\lambda}} - \frac{\lambda}{2} \left(1 + \frac{\delta_x}{D_{r,\lambda}}\right) \\
                &\quad+ \left(\frac{r}{2} - 1\right) \log\left(1 + \frac{\delta_x}{D_{r,\lambda}}\right) - \frac{\frac{1}{6} + \lambda + \lambda^2/4}{r} + \OO\left(\frac{1 \vee \lambda^3}{r^2}\right) \\
                &\quad+ \log \sum_{j=0}^{\infty} \frac{1}{j!} \left(\frac{\lambda}{2} \, \bigg(1 + \frac{\delta_x}{D_{r,\lambda}}\bigg)\right)^j \, \frac{2^{-j} (r + \lambda)^j \Gamma(r/2)}{\Gamma(r/2 + j)}.
            \end{aligned}
        \end{equation}
        Now,
        \begin{equation}
            \begin{aligned}
                \frac{2^{-j} (r + \lambda)^j \Gamma(r/2)}{\Gamma(r/2 + j)}
                &= 1 + \frac{j (\lambda + 1) - j^2}{r} \\
                &\quad+ \OO\bigg(\frac{(1 \vee \lambda^2) (j + j^2) + (1 \vee \lambda) j^3 + j^4}{r^2}\bigg),
            \end{aligned}
        \end{equation}
        and, for any $a\in \R$,
        \begin{equation}
            \begin{aligned}
                &\log\left(\sum_{j=0}^{\infty} \frac{a^j}{j!} \, \left\{1 + \frac{j (\lambda + 1) - j^2}{r} + \OO\bigg(\frac{(1 \vee \lambda^2) (j + j^2) + (1 \vee \lambda) j^3 + j^4}{r^2}\bigg)\right\}\right) \\
                &= a + \log \left(1 + \frac{a (\lambda - a)}{r} + \OO\left(\frac{|a|}{r^2} \cdot \left\{\hspace{-1mm}
                    \begin{array}{l}
                        (1 \vee \lambda^2) (1 + |a|) \\
                        + (1 \vee \lambda) (1 + |a|^2) + (1 + |a|^3)
                    \end{array}
                    \hspace{-1mm}\right\}\right)\right),
            \end{aligned}
        \end{equation}
        so if we take
        \begin{equation}
            a = \frac{\lambda}{2} \, \bigg(1 + \frac{\delta_x}{D_{r,\lambda}}\bigg),
        \end{equation}
        we can rewrite \eqref{eq:LLT.beginning.next.2} as
        \begin{equation}\label{eq:LLT.beginning.next.3}
            \begin{aligned}
                \log\big(f_{r,\lambda}(x)\big)
                &= - \frac{1}{2} \log (2\pi \, 2r) - \frac{r}{2} \, \frac{\delta_x}{D_{r,\lambda}} \\[-1mm]
                &\quad+ \left(\frac{r}{2} - 1\right) \log\left(1 + \frac{\delta_x}{D_{r,\lambda}}\right) - \frac{\frac{1}{6} + \lambda + \lambda^2/4}{r} + \OO\left(\frac{1 \vee \lambda^3}{r^2}\right) \\
                &\quad+ \log \left(1 + \frac{\lambda}{2 r} \, \left(1 + \frac{\delta_x}{D_{r,\lambda}}\right) \left[\lambda - \frac{\lambda}{2} \, \left(1 + \frac{\delta_x}{D_{r,\lambda}}\right)\right] + \OO\left(\frac{1 \vee \lambda^4}{r^2}\right)\right).
            \end{aligned}
        \end{equation}
        Using the Taylor expansion
        \begin{equation}\label{eq:Taylor.log.1.plus.y}
            \log(1 + y) = y - \frac{y^2}{2} + \frac{y^3}{3} - \frac{y^4}{4} + \frac{y^5}{5} + \OO_{\eta}(y^6),
        \end{equation}
        valid for $|y| \leq \eta < 1$, we have
        \begin{align}\label{eq:LLT.beginning.next.4}
            \log\big(f_{r,\lambda}(x)\big)
            &= - \frac{1}{2} \log (2\pi \, 2r) - \frac{r}{2} \, \frac{\delta_x}{D_{r,\lambda}} - \frac{\frac{1}{6} + \lambda}{r} + \OO\left(\frac{(1 \vee \lambda^4) + \lambda^2 |\delta_x|^2}{r^2}\right) \notag \\
            &\quad+ \left(\frac{r}{2} - 1\right) \, \left\{\hspace{-1mm}
                \begin{array}{l}
                    \frac{\delta_x}{D_{r,\lambda}} - \frac{1}{2} \big(\frac{\delta_x}{D_{r,\lambda}}\big)^2 + \frac{1}{3} \big(\frac{\delta_x}{D_{r,\lambda}}\big)^3 - \frac{1}{4} \big(\frac{\delta_x}{D_{r,\lambda}}\big)^4 \\
                    + \frac{1}{5} \big(\frac{\delta_x}{D_{r,\lambda}}\big)^5 + \OO_{\eta}\left(\frac{1 + |\delta_x|^6}{r^3}\right)
                \end{array}
                \hspace{-1mm}\right\} \notag \\[0.5mm]
            &= - \frac{1}{2} \log (2\pi \, 2r) - \frac{\frac{1}{6} + \lambda}{r} + \OO\left(\frac{(1 \vee \lambda^4) + \lambda^2 |\delta_x|^2}{r^2}\right) \notag \\[0.5mm]
            &\quad- \frac{\sqrt{2 (r + 2\lambda)}}{(r + \lambda)} \delta_x - \frac{(r - 2) (r + 2\lambda)}{2 (r + \lambda)^2} \delta_x^2 + \frac{(r - 2) 2^{1/2} (r + 2\lambda)^{3/2}}{3 (r + \lambda)^3} \delta_x^3 \notag \\
            &\quad- \frac{(r - 2) (r + 2\lambda)^2}{2 (r + \lambda)^4} \delta_x^4 + \frac{(r - 2) 2^{3/2} (r + 2\lambda)^{5/2}}{5 (r + \lambda)^5} \delta_x^5.
        \end{align}
        Since
        \begin{equation}
            \begin{aligned}
                - \frac{\sqrt{2 (r + 2\lambda)}}{(r + \lambda)} &= - \frac{\sqrt{2}}{r^{1/2}} + \OO\left(\frac{1 \vee \lambda^2}{r^{5/2}}\right), \\[0.5mm]
                - \frac{(r - 2) (r + 2\lambda)}{2 (r + \lambda)^2} &= - \frac{1}{2} + \frac{1}{r} + \OO\left(\frac{1 \vee \lambda^2}{r^2}\right), \\[0.5mm]
                \frac{(r - 2) 2^{1/2} (r + 2\lambda)^{3/2}}{3 (r + \lambda)^3} &= \frac{\sqrt{2} / 3}{r^{1/2}} - \frac{2^{3/2}/3}{r^{3/2}} + \OO\left(\frac{1 \vee \lambda^2}{r^{5/2}}\right), \\[0.5mm]
                - \frac{(r - 2) (r + 2\lambda)^2}{2 (r + \lambda)^4} &= - \frac{1}{2 r} + \OO\left(\frac{1}{r^2}\right), \\[0.5mm]
                \frac{(r - 2) 2^{3/2} (r + 2\lambda)^{5/2}}{5 (r + \lambda)^5} &= \frac{2^{3/2}/5}{r^{3/2}} + \OO\left(\frac{1}{r^{5/2}}\right),
            \end{aligned}
        \end{equation}
        and
        \begin{equation}
            - \frac{1}{2} \log (2\pi \, 2r) = -\frac{1}{2} \log (2 \pi \, 2 (r + 2\lambda)) + \frac{\lambda}{r} + \OO\left(\frac{1 \vee \lambda^2}{r^2}\right),
        \end{equation}
        we can rewrite \eqref{eq:LLT.beginning.next.4} as
        \begin{align}\label{eq:lem:LLT.chi.square.log.eq.in.proof}
            \log\bigg(\frac{f_{r,\lambda}(x)}{\frac{1}{\sqrt{2 (r + 2\lambda)}} \phi(\delta_x)}\bigg)
            &= r^{-1/2} \, \bigg\{\frac{\sqrt{2}}{3} \, \delta_x^3 - \sqrt{2} \, \delta_x\bigg\} + r^{-1} \, \bigg\{-\frac{1}{2} \, \delta_x^4 + \delta_x^2 - \frac{1}{6}\bigg\} \notag \\[-2mm]
            &\quad+ r^{-3/2} \, \bigg\{\frac{2^{3/2}}{5} \, \delta_x^5 - \frac{2^{3/2}}{3} \, \delta_x^3\bigg\} + \OO\left(\frac{(1 \vee \lambda^4) + \lambda^2 |\delta_x|^2}{r^2}\right),
        \end{align}
        which proves \eqref{eq:lem:LLT.chi.square.eq.log}.
        To obtain \eqref{eq:lem:LLT.chi.square.eq} and conclude the proof, we take the exponential on both sides of the last equation and we expand the right-hand side with
        \begin{equation}\label{eq:Taylor.exponential}
            e^y = 1 + y + \frac{y^2}{2} + \frac{y^3}{6} + \OO(e^{\widetilde{\eta}} y^4), \quad -\infty < y \leq \widetilde{\eta}.
        \end{equation}
        For $r$ large enough and uniformly for $x\in B_{r,\lambda}(\eta)$, the right-hand side of \eqref{eq:lem:LLT.chi.square.log.eq.in.proof} is $\OO_{\lambda}(1)$.
        When this bound is taken as $y$ in \eqref{eq:Taylor.exponential}, it explains the error in \eqref{eq:lem:LLT.chi.square.eq}.
    \end{proof}

    \begin{proof}[Proof of Theorem~\ref{thm:refined.approximations}]
        Let
        \begin{equation}
            c = d_1 + \frac{d_2}{\sqrt{r}} + \frac{d_3}{r},
        \end{equation}
        where $d_1,d_2,d_3\in \R$ are to be chosen later, then we have the Taylor expansion
        \begin{equation}\label{eq:c}
            \begin{aligned}
                \int_{\delta_{a-c}}^{\delta_a} \phi(y) \rd y
                &= \phi(\delta_a) \int_{\delta_{a-c}}^{\delta_a} \rd y + \phi'(\delta_a) \int_{\delta_{a-c}}^{\delta_a} (y - \delta_a) \rd y + \frac{\phi''(\delta_a)}{2} \int_{\delta_{a-c}}^{\delta_a} (y - \delta_a)^2 \rd y \\
                &\quad+ \OO\left(\frac{\phi'''(\delta_a)}{6} \int_{\delta_{a-c}}^{\delta_a} (y - \delta_a)^3 \rd y\right) \\
                &= \phi(\delta_a) \, \left\{\hspace{-1mm}
                \begin{array}{l}
                    \frac{c}{\sqrt{2 (r + 2\lambda)}} + \frac{c^2 \, \delta_a}{4 (r + 2\lambda)} \\[2mm]
                    + \frac{c^3 (\delta_a^2 - 1)}{6 \cdot 2^{3/2} (r + 2 \lambda)^{3/2}} + \OO\big(\frac{1 + |\delta_a|^3}{r^2}\big)
                \end{array}
                \hspace{-1mm}\right\} \\
                &= \phi(\delta_a) \, \left\{\hspace{-1mm}
                \begin{array}{l}
                    r^{-1/2} \, \frac{1}{\sqrt{2}} \, d_1 + r^{-1} \, \big(\frac{\delta_a}{4} \, d_1^2 + \frac{1}{\sqrt{2}} \, d_2\big) \\[1mm]
                    r^{-3/2} \, \bigg(\hspace{-1mm}
                        \begin{array}{l}
                            \frac{(\delta_a^2 - 1)}{12 \sqrt{2}} \, d_1^3 - \frac{\lambda}{\sqrt{2}} \, d_1 \\[1mm]
                            + \frac{\delta_a}{2} \, d_1 d_2 + \frac{1}{\sqrt{2}} \, d_3
                        \end{array}
                        \hspace{-1mm}\bigg) + \OO\big(\frac{1 + |\delta_a|^3}{r^2}\big)
                \end{array}
                \hspace{-1mm}\right\}
            \end{aligned}
        \end{equation}
        We also have the straightforward large deviation bounds
        \begin{equation}\label{eq:LD}
            \begin{aligned}
                &\int_{[a,\infty) \cap B_{r,\lambda}^{\hspace{0.2mm}c}(1/2)} \hspace{-0.6mm} f_{r,\lambda}(x) \rd x = \OO(e^{-\beta r^{1/3}}), \\
                &\int_{[a,\infty) \cap B_{r,\lambda}^{\hspace{0.2mm}c}(1/2)} \hspace{-0.6mm} \phi(y) \rd y = \OO(e^{-\beta r^{1/3}}),
            \end{aligned}
        \end{equation}
        where $\beta = \beta(\lambda) > 0$ is a small enough constant, and the local approximation in Lemma~\ref{lem:LLT.chi.square} yields
        \begin{equation}\label{eq:Cressie.generalization.eq.4}
            \begin{aligned}
                \int_a^{\infty} \hspace{-0.6mm} f_{r,\lambda}(x) \rd x - \int_{\delta_a}^{\infty} \hspace{-1mm} \phi(y) \rd y
                &= r^{-1/2} \bigg\{\frac{\sqrt{2}}{3} \Psi_3(\delta_a) - \sqrt{2} \Psi_1(\delta_a)\bigg\} \\
                &\quad+ r^{-1} \left\{\frac{1}{9} \Psi_6(\delta_a) - \frac{7}{6} \Psi_4(\delta_a) + 2 \Psi_2(\delta_a) - \frac{1}{6} \Psi(\delta_a)\right\} \\
                &\quad+ r^{-3/2} \left\{\hspace{-1mm}
                    \begin{array}{l}
                        \frac{\sqrt{2}}{81} \Psi_9(\delta_a) - \frac{5}{9\sqrt{2}} \Psi_7(\delta_a) + \frac{47}{15\sqrt{2}} \Psi_5(\delta_a) \\[1.5mm]
                        - \frac{37}{9\sqrt{2}} \Psi_3(\delta_a) + \frac{1}{3\sqrt{2}} \Psi_1(\delta_a)
                    \end{array}
                    \hspace{-1mm}\right\} \\
                &\quad+ \OO\left(\frac{1 \vee \lambda^4}{r^2}\right),
            \end{aligned}
        \end{equation}
        where $\Psi_k(\delta_a) \leqdef \int_{\delta_a} y^k \phi(y) \rd y$.
        Now, using the fact that
        \begin{equation}
            \begin{aligned}
                &\Psi_9(\delta_a) = (384 + 192 \delta_a^2 + 48 \delta_a^4 + 8 \delta_a^6 + \delta_a^8) \phi(\delta_a), \\
                &\Psi_7(\delta_a) = (48 + 24 \delta_a^2 + 6 \delta_a^4 + \delta_a^6) \phi(\delta_a), \\
                &\Psi_6(\delta_a) = (15 \delta_a + 5 \delta_a^3 + \delta_a^5) \phi(\delta_a) + 15 \Psi(\delta_a), \\
                &\Psi_5(\delta_a) = (8 + 4 \delta_a^2 + \delta_a^4) \phi(\delta_a), \\
                &\Psi_4(\delta_a) = (3 \delta_a + \delta_a^3) \phi(\delta_a) + 3 \Psi(\delta_a), \\
                &\Psi_3(\delta_a) = (2 + \delta_a^2) \phi(\delta_a), \\
                &\Psi_2(\delta_a) = \delta_a \phi(\delta_a) + \Psi(\delta_a), \\
                &\Psi_1(\delta_a) = \phi(\delta_a),
            \end{aligned}
        \end{equation}
        where $\Psi$ denotes the survival function of the standard normal distribution, Equations~\eqref{eq:c}, \eqref{eq:LD} and \eqref{eq:Cressie.generalization.eq.4} together yield
        \begin{equation}\label{eq:Cressie.generalization.eq.5}
            \begin{aligned}
                &\int_a^{\infty} \hspace{-0.6mm} f_{r,\lambda}(x) \rd x - \int_{\delta_{a - c}}^{\infty} \hspace{-1mm} \phi(y) \rd y \\
                &\qquad= r^{-1/2} \bigg\{\frac{\sqrt{2}}{3} (\delta_a^2 - 1) - \frac{1}{\sqrt{2}} \, d_1\bigg\} \, \phi(\delta_a) \\
                &\qquad\quad+ r^{-1} \left\{\frac{\delta_a}{18} \left(2 \delta_a^4 - 11 \delta_a^2 + 3\right) - \left(\frac{\delta_a}{4} \, d_1^2 + \frac{1}{\sqrt{2}} \, d_2\right)\right\} \, \phi(\delta_a) \\
                &\qquad\quad+ r^{-3/2} \left\{\hspace{-1mm}
                    \begin{array}{l}
                        \frac{\sqrt{2}}{81} \delta_a^8 - \frac{29}{81\sqrt{2}} \delta_a^6 + \frac{133}{135\sqrt{2}} \delta_a^4 - \frac{23}{135 \sqrt{2}} \delta_a^2 - \frac{1}{135 \sqrt{2}} \\[1mm]
                        - \left(\frac{(\delta_a^2 - 1)}{12 \sqrt{2}} \, d_1^3 - \frac{\lambda}{\sqrt{2}} \, d_1 + \frac{\delta_a}{2} \, d_1 d_2 + \frac{1}{\sqrt{2}} \, d_3\right)
                    \end{array}
                    \hspace{-1mm}\right\} \, \phi(\delta_a) \\
                &\qquad\quad+ \OO\left(\frac{1 \vee \lambda^4}{r^2}\right).
            \end{aligned}
        \end{equation}
        If we select $d_1 = d_2 = d_3 = 0$, then
        \begin{equation}\label{eq:Cressie.generalization.eq.5.order.0}
            \begin{aligned}
                &\max_{a\in \R} \left|\int_a^{\infty} \hspace{-0.6mm} f_{r,\lambda}(x) \rd x - \int_{\delta_{a - c}}^{\infty} \hspace{-1mm} \phi(y) \rd y\right| \\[-0.5mm]
                &\quad\leq r^{-1/2} \, \max_{a\in \R} \frac{\sqrt{2}}{3} |\delta_a^2 - 1| \phi(\delta_a) + \OO(r^{-1}).
            \end{aligned}
        \end{equation}
        Since $\max_{y\in \R} \frac{\sqrt{2}}{3} |y^2 - 1| \phi(y) = \frac{1}{\sqrt{9 \pi}}$, this proves \eqref{eq:order.0.approx}.
        If we select $d_1 = \frac{2}{3} (\delta_a^2 - 1)$ and $d_2 = d_3 = 0$ to cancel the first brace in \eqref{eq:Cressie.generalization.eq.5}, then
        \begin{equation}\label{eq:Cressie.generalization.eq.5.order.1}
            \begin{aligned}
                &\max_{a\in \R} \left|\int_a^{\infty} \hspace{-0.6mm} f_{r,\lambda}(x) \rd x - \int_{\delta_{a - c}}^{\infty} \hspace{-1mm} \phi(y) \rd y\right| \\[-0.5mm]
                &\quad\leq r^{-1} \, \max_{a\in \R} \frac{1}{18} |7 \delta_a^3 - \delta_a| \phi(\delta_a) + \OO\left(\frac{1 \vee \lambda}{r^{3/2}}\right),
            \end{aligned}
        \end{equation}
        which proves \eqref{eq:order.1.approx}.
        If we select $d_1 = \frac{2}{3} (\delta_a^2 - 1)$, $d_2 = \frac{1}{9\sqrt{2}} (\delta_a - 7 \delta_a^3)$ and $d_3 = 0$ to cancel the first two braces in \eqref{eq:Cressie.generalization.eq.5}, then
        \begin{equation}\label{eq:Cressie.generalization.eq.5.order.2}
            \begin{aligned}
                &\max_{a\in \R} \left|\int_a^{\infty} \hspace{-0.6mm} f_{r,\lambda}(x) \rd x - \int_{\delta_{a - c}}^{\infty} \hspace{-1mm} \phi(y) \rd y\right| \\[-0.5mm]
                &\quad\leq r^{-3/2} \, \max_{a\in \R} \frac{1}{405\sqrt{2}} \left|219 \delta_a^4 + (270 \lambda - 14) \delta_a^2 - (270 \lambda + 13)\right| \phi(\delta_a) \\[0.5mm]
                &\quad\quad+ \OO\left(\frac{1 \vee \lambda^4}{r^2}\right),
            \end{aligned}
        \end{equation}
        which proves \eqref{eq:order.2.approx}.
        If we select $d_1 = \frac{2}{3} (\delta_a^2 - 1)$, $d_2 = \frac{1}{9\sqrt{2}} (\delta_a - 7 \delta_a^3)$ and $d_3 = \frac{1}{405} \big(219 \delta_a^4 + (270 \lambda - 14) \delta_a^2 - (270 \lambda + 13)\big)$ to cancel the three braces in \eqref{eq:Cressie.generalization.eq.5}, then
        \begin{equation}\label{eq:Cressie.generalization.eq.5.order.3}
            \max_{a\in \R} \left|\int_a^{\infty} \hspace{-0.6mm} f_{r,\lambda}(x) \rd x - \int_{\delta_{a - c}}^{\infty} \hspace{-1mm} \phi(y) \rd y\right| = \OO\left(\frac{1 \vee \lambda^4}{r^2}\right),
        \end{equation}
        which proves \eqref{eq:order.3.approx}.
        This ends the proof.
    \end{proof}

\section{Moments of the central and noncentral chi-square distribution}\label{sec:moments}

    In the lemma below, we prove a general formula for the central moments of the central and noncentral chi-square distribution, and we evaluate the first, second, third, fourth and sixth central moments explicitly. This lemma is used to estimate the $\asymp r^{-1}$ errors in \eqref{eq:estimate.I.begin} of the proof of Theorem~\ref{thm:total.variation}.
    It is also a preliminary result for the proof of Corollary~\ref{cor:central.moments.chi.square.on.events} below, where the central moments are estimated on various events.

    \begin{lemma}[Central moments]\label{lem:central.moments.chi.square}
        Let $X\sim \chi_r^2(\lambda)$ for some $r > 0$ and $\lambda\geq 0$.
        We have
        \begin{equation}\label{eq:lem:central.moments.chi.square.examples}
            \begin{aligned}
                &\EE[(X - (r + \lambda))] = 0, \\[0.5mm]
                &\EE[(X - (r + \lambda))^2] = 2 \, (r + 2 \lambda), \\[0.5mm]
                &\EE[(X - (r + \lambda))^3] = 8 \, (r + 3 \lambda), \\[0.5mm]
                &\EE[(X - (r + \lambda))^4] = 12 \, (r^2 + 4 r (1 + \lambda) + 4 \lambda (4 + \lambda)), \\
                &\EE[(X - (r + \lambda))^6] = 40 \left(\hspace{-1mm}
                    \begin{array}{l}
                        3 r^3 + 2 r^2 (26 + 9 \lambda) + 12 r (8 + 26 \lambda + 3 \lambda^2) \\
                        + 24 \lambda (24 + 18 \lambda + \lambda^2)
                    \end{array}
                    \hspace{-1mm}\right).
            \end{aligned}
        \end{equation}
    \end{lemma}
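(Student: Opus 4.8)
The plan is to derive a general closed form for the central moments $\mu_k \leqdef \EE[(X - (r+\lambda))^k]$ and then specialize to $k = 1, 2, 3, 4, 6$. The cleanest route is through the cumulant generating function. Recall that for $X \sim \chi_r^2(\lambda)$ the moment generating function is $\EE[e^{tX}] = (1 - 2t)^{-r/2} \exp\!\big(\tfrac{\lambda t}{1 - 2t}\big)$ for $t < 1/2$, so the cumulant generating function is
\begin{equation*}
    K_X(t) = -\frac{r}{2} \log(1 - 2t) + \frac{\lambda t}{1 - 2t}.
\end{equation*}
Differentiating and evaluating at $t = 0$, one reads off the cumulants $\kappa_1 = r + \lambda$ and, for $m \geq 2$,
\begin{equation*}
    \kappa_m = 2^{m-1} (m - 1)! \, (r + m \lambda).
\end{equation*}
This is the first key step. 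Note $\kappa_1 = \EE[X]$, so the central moments $\mu_k$ are exactly the polynomials in the cumulants $\kappa_2, \kappa_3, \dots$ with no dependence on $\kappa_1$ — the standard ``moments in terms of cumulants'' formulas with $\kappa_1$ set to zero.

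Next I would assemble $\mu_k$ from $\kappa_2, \dots, \kappa_k$ using the well-known combinatorial identities (sums over set partitions, or equivalently the complete Bell polynomial expansion with the mean subtracted): $\mu_2 = \kappa_2$, $\mu_3 = \kappa_3$, $\mu_4 = \kappa_4 + 3 \kappa_2^2$, and $\mu_6 = \kappa_6 + 15 \kappa_4 \kappa_2 + 10 \kappa_3^2 + 15 \kappa_2^3$. Substituting $\kappa_2 = 2(r + 2\lambda)$, $\kappa_3 = 8(r + 3\lambda)$, $\kappa_4 = 48(r + 4\lambda)$, $\kappa_6 = 3840(r + 6\lambda)$ and expanding gives the five displayed formulas. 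For instance, $\mu_4 = 48(r + 4\lambda) + 3 \cdot 4 (r + 2\lambda)^2 = 12\big(4(r + 4\lambda) + (r + 2\lambda)^2\big) = 12\big(r^2 + 4r(1 + \lambda) + 4\lambda(4 + \lambda)\big)$, matching the statement; the sixth moment is handled identically, just with more terms. I would also state the general formula $\mu_k = \sum_{\pi} \prod_{b \in \pi} \kappa_{|b|}$, the sum running over partitions $\pi$ of $\{1, \dots, k\}$ into blocks of size at least $2$, with $\kappa_m = 2^{m-1}(m-1)!(r + m\lambda)$; this is the ``general formula for the central moments'' promised in the preamble.

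An alternative I would keep in reserve, in case a self-contained derivation avoiding the partition-formula machinery is preferred, is the recursion obtained by differentiating the identity $K_X'(t) = \kappa_1 + \sum_{m \geq 2} \kappa_m t^{m-1}/(m-1)!$ against the moment series, or more directly Stein-type / integration-by-parts recursions for $\mu_k$ in the chi-square case; but these are messier than simply quoting the cumulant-to-moment conversion. The main obstacle is purely bookkeeping: correctly expanding $\mu_6$, which involves four partition-type terms and produces a cubic polynomial in $r$ with $\lambda$-dependent coefficients, and verifying the overall factor of $40$ and each coefficient against the stated expression. There is no conceptual difficulty — the cumulants of the noncentral chi-square are elementary — so the proof is a short computation once the CGF is written down.
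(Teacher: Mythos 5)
Your proof is correct, and it takes a genuinely different (though related) route from the paper. The paper's proof is essentially a symbolic computation: it differentiates the moment generating function $\EE[e^{tX}] = (1-2t)^{-r/2}\exp\big(\tfrac{\lambda t}{1-2t}\big)$ to obtain the raw moments $\EE[X^i]$ and then converts to central moments via the binomial formula, with the algebra carried out in \texttt{Mathematica} (it also mentions, as an alternative, the recurrence of Withers and Nadarajah). You instead pass to the cumulant generating function, read off the closed form $\kappa_1 = r+\lambda$ and $\kappa_m = 2^{m-1}(m-1)!\,(r+m\lambda)$ for $m\geq 2$, and then assemble $\mu_2=\kappa_2$, $\mu_3=\kappa_3$, $\mu_4=\kappa_4+3\kappa_2^2$, $\mu_6=\kappa_6+15\kappa_4\kappa_2+10\kappa_3^2+15\kappa_2^3$ from the standard moment--cumulant (set-partition) identities; I checked the substitutions and they reproduce all five displayed formulas, including the sixth-moment polynomial with the factor $40$. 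What your approach buys is a hand-verifiable derivation with no reliance on computer algebra, plus a genuinely general formula ($\mu_k$ as a sum over partitions into blocks of size at least two, with explicit $\kappa_m$), which the paper's proof only gestures at; what the paper's approach buys is brevity, since it delegates the bookkeeping to software. The only caveat is that you quote the moment--cumulant identities and the cumulant coefficients without proof, but both are classical and the cumulant formula follows in two lines from expanding $-\tfrac{r}{2}\log(1-2t)+\lambda t(1-2t)^{-1}$, so there is no gap.
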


    \begin{proof}[Proof of Lemma~\ref{lem:central.moments.chi.square}]
        One way to compute these central moments would be to apply the recurrence formula developed in \cite{MR2411792}.
        An other method consists in differentiating the moment-generating function
        \begin{equation}
            \EE[e^{t X}] = (1 - 2 t)^{-r/2} \exp\bigg(\frac{\lambda t}{1 - 2 t}\bigg), \quad t < 1/2,
        \end{equation}
        in order to find $\EE[X^i], \, i\in \N,$ and then use the binomial formula:
        \begin{equation}
            \EE[(X - (r + \lambda))^n] = \sum_{i=0}^n \binom{n}{i} \EE[X^i] \, (-1)^{n-i} (r + \lambda)^{n-i}, \quad n\in \N.
        \end{equation}
        Using the latter approach with \texttt{Mathematica} give us the result.
    \end{proof}

    We can also estimate the moments of Lemma~\ref{lem:central.moments.chi.square} on various events.
    The corollary below is used to estimate the $\asymp r^{-1/2}$ errors in \eqref{eq:estimate.I.begin} of the proof of Theorem~\ref{thm:total.variation}.

    \begin{corollary}[Central moments on various events]\label{cor:central.moments.chi.square.on.events}
        Let $X\sim \chi_r^2(\lambda)$ for some $r > 0$ and $0 \leq \lambda \leq r$, and let $A\in \mathscr{B}(\R)$ be a Borel set.
        Then,
        \begin{equation}\label{eq:cor:central.moments.chi.square.on.events.main}
            \begin{aligned}
                &\big|\EE[(X - (r + \lambda)) \, \ind_{\{X\in A\}}]\big| \leq 6^{1/2} \, r^{1/2} \, (\PP(X\in A^c))^{1/2}, \\
                &\big|\EE[(X - (r + \lambda))^2 \, \ind_{\{X\in A\}}] - 2 \, (r + 2 \lambda)\big| \leq 348^{1/2} \, r \, (\PP(X\in A^c))^{1/2}, \\
                &\big|\EE[(X - (r + \lambda))^3 \, \ind_{\{X\in A\}}] - 8 \, (r + 3 \lambda)\big| \leq 61960^{1/2} \, r^{3/2} \, (\PP(X\in A^c))^{1/2}.
            \end{aligned}
        \end{equation}
    \end{corollary}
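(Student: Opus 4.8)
The plan is to reduce all three inequalities to a single application of the Cauchy--Schwarz inequality, fed by the explicit central moments recorded in Lemma~\ref{lem:central.moments.chi.square}.

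First I would rewrite each left-hand side in \eqref{eq:cor:central.moments.chi.square.on.events.main} using $\ind_{\{X\in A\}} = 1 - \ind_{\{X\in A^c\}}$. Since Lemma~\ref{lem:central.moments.chi.square} gives $\EE[(X-(r+\lambda))] = 0$, $\EE[(X-(r+\lambda))^2] = 2(r+2\lambda)$ and $\EE[(X-(r+\lambda))^3] = 8(r+3\lambda)$, the quantity to be bounded, for $k\in\{1,2,3\}$, is exactly
\[
    \big|\EE[(X-(r+\lambda))^k \, \ind_{\{X\in A\}}] - \EE[(X-(r+\lambda))^k]\big|
    = \big|\EE[(X-(r+\lambda))^k \, \ind_{\{X\in A^c\}}]\big|.
\]
Then I would apply Cauchy--Schwarz to the product $(X-(r+\lambda))^k \cdot \ind_{\{X\in A^c\}}$, which yields
\[
    \big|\EE[(X-(r+\lambda))^k \, \ind_{\{X\in A^c\}}]\big|
    \leq \big(\EE[(X-(r+\lambda))^{2k}]\big)^{1/2} \, \big(\PP(X\in A^c)\big)^{1/2},
\]
so that everything reduces to controlling the even central moments of orders $2$, $4$ and $6$.

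Second, I would substitute the corresponding formulas from Lemma~\ref{lem:central.moments.chi.square} and coarsely dominate each polynomial in $(r,\lambda)$ by a pure power of $r$, using the hypothesis $0\leq\lambda\leq r$ to replace every factor $\lambda$ by $r$ (and $r\geq 1$ to absorb the lower-order powers of $r$ into the leading term). This gives $\EE[(X-(r+\lambda))^2]\leq 6r$, $\EE[(X-(r+\lambda))^4]\leq 348\,r^2$ and $\EE[(X-(r+\lambda))^6]\leq 61960\,r^3$; taking square roots produces precisely the constants $6^{1/2}$, $348^{1/2}$, $61960^{1/2}$ and the powers $r^{1/2}$, $r$, $r^{3/2}$ appearing in the statement, and combining with the Cauchy--Schwarz bound closes the argument.

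There is no genuine obstacle here: the proof is a one-line Cauchy--Schwarz estimate, and the only real work is the bookkeeping of the universal constants when bounding the moment polynomials by powers of $r$ --- which is exactly why Lemma~\ref{lem:central.moments.chi.square} takes the trouble to spell out the sixth central moment. The only subtlety worth flagging is that these particular numerical constants need $r$ bounded away from $0$ (say $r\geq 1$), which is costless since the corollary is invoked only in the $r\to\infty$ regime of the proof of Theorem~\ref{thm:total.variation}.
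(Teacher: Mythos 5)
Your proposal is correct and follows essentially the same route as the paper: rewrite each left-hand side as $\big|\EE[(X-(r+\lambda))^k\,\ind_{\{X\in A^c\}}]\big|$ using the exact central moments of Lemma~\ref{lem:central.moments.chi.square}, apply Cauchy--Schwarz, and bound the even moments of orders $2$, $4$, $6$ by $6r$, $348\,r^2$, $61960\,r^3$ via $\lambda\leq r$. Your observation that the stated constants require $r$ bounded below (e.g.\ $r\geq 1$) is a fair and accurate refinement of the bookkeeping the paper leaves implicit.
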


    \begin{proof}[Proof of Corollary~\ref{cor:central.moments.chi.square.on.events}]
        Note that \eqref{eq:lem:central.moments.chi.square.examples} implies
        \begin{equation}\label{eq:cor:central.moments.chi.square.on.events.proof.first}
            \begin{aligned}
                &\EE[(X - (r + \lambda))^2] = 6 \, r, \\
                &\EE[(X - (r + \lambda))^4] \leq 348 \, r^2, \\
                &\EE[(X - (r + \lambda))^6] \leq 61960 \, r^3.
            \end{aligned}
        \end{equation}
        By \eqref{eq:lem:central.moments.chi.square.examples}, we also have
        \begin{equation}
            \begin{aligned}
                &\big|\EE[(X - (r + \lambda)) \, \ind_{\{X\in A\}}]\big| = \big|\EE[(X - (r + \lambda)) \, \ind_{\{X\in A^c\}}]\big|, \\
                &\big|\EE[(X - (r + \lambda))^2 \, \ind_{\{X\in A\}}] - 2 \, (r + 2 \lambda)\big| = \big|\EE[(X - (r + \lambda))^2 \, \ind_{\{X\in A^c\}}]\big|, \\
                &\big|\EE[(X - (r + \lambda))^3 \, \ind_{\{X\in A\}}] - 8 \, (r + 3 \lambda)\big| = \big|\EE[(X - (r + \lambda))^3 \, \ind_{\{X\in A^c\}}]\big|.
            \end{aligned}
        \end{equation}
        We get \eqref{eq:cor:central.moments.chi.square.on.events.main} by applying the Cauchy-Schwarz inequality and bounding using \eqref{eq:cor:central.moments.chi.square.on.events.proof.first}.
    \end{proof}

\end{appendices}

\section*{Acknowledgments}

We thank Robert Ferydouni (University of California - Santa Cruz) for collecting some of the references in Section~\ref{sec:related.works} and helping us use the \texttt{latex2exp} package in \texttt{R}.
F.\ Ouimet is supported by postdoctoral fellowships from the NSERC (PDF) and the FRQNT (B3X supplement and B3XR).

%
%

\phantomsection
\addcontentsline{toc}{chapter}{References}



\end{document}